\newtheorem{theorem}{Theorem}[section]
\newtheorem{proposition}[theorem]{Proposition}
\newtheorem{lemma}[theorem]{Lemma}
\newtheorem{corollary}[theorem]{Corollary}
\theoremstyle{definition}
\newtheorem{definition}[theorem]{Definition}
\newtheorem{remark}[theorem]{Remark}
\newtheorem{example}[theorem]{Example}
\newcommand{\NN}{\mathbb{N}}
\newcommand{\KK}{\mathbb{K}}
\newcommand{\TT}{\mathbb{T}}
\newcommand{\shu}{\shuffle}
\newcommand{\C}{\mathbb{C}}
\newcommand{\N}{\mathbb{N}}
\newcommand{\R}{\mathbb{R}}
\newcommand{\G}{\mathcal{G}}
\newcommand{\ot}{\otimes}
\newcommand{\s}{\mathop{\mathbb{S}}\nolimits}
\newcommand{\GL}{\mathop{\rm GL}\nolimits}
\newcommand{\Sym}{\mathop{\rm Sym}\nolimits}
\newcommand{\rk}{\mathop{\rm rk}\nolimits}
\newcommand{\Lie}{\mathop{\rm Lie}\nolimits}
\newcommand{\im}{\mathop{\rm im}\nolimits}
\newcommand{\compositions}{P}
\newcommand{\rd}[1]{\left\lfloor #1\right\rfloor}
\newcommand{\ru}[1]{\left\lceil #1\right\rceil}
\newcommand{\f}{\varphi}
\title{Rank and symmetries of signature tensors}
\author[F. Galuppi]{Francesco Galuppi}
\address[F. Galuppi]{Faculty of Mathematics, Informatics, and Mechanics, University of Warsaw, Banacha 2, 02-097 Warsaw, Poland}
\thanks{Galuppi acknowledges support by the National Science Center, Poland, under the projects ``Complex contact manifolds and geometry of secants'', 2017/26/E/ST1/00231, and ``Tensor rank and its applications to signature tensors of paths'', 2023/51/D/ST1/02363.}
\email{galuppi@mimuw.edu.pl (ORCID 0000-0001-5630-5389)}
\author[P. Santarsiero]{Pierpaola Santarsiero}
\address[P. Santarsiero]{Universit\`a di Bologna, Dipartimento di Matematica, Piazza di Porta S.Donato 5, Bologna, Italy}
\email{pierpaola.santarsiero@unibo.it (ORCID 0000-0003-1322-8752)}
\thanks{Santarsiero was partially supported by the Deutsche Forschungsgemeinschaft (DFG, German Research Foundation) -- Projektnummer 445466444 and by the European Union under NextGenerationEU. PRIN 2022, Prot. 2022E2Z4AK}
\begin{document}

\begin{abstract}
The signature of a path is a sequence of tensors which allows to uniquely reconstruct the path. 
In this paper we propose a systematic study of basic properties of signature tensors, starting from their rank, symmetries and conciseness. We prove a sharp upper bound on the rank of signature tensors of piecewise linear paths. We show that there are no skew-symmetric signature tensors of order three or more, and  
we also prove that specific instances of partial symmetry can only happen for tensors of order three.
Finally, we give a simple geometric characterization of paths whose signature tensors are not concise. 
\end{abstract}

\maketitle

\section{Introduction}\label{section: intro}

The signature of a path $X$ is a sequence of tensors that encodes all the essential information contained in $X$.  
Signatures have been introduced in \cite{Chenoriginal} for paths $X:[0,1]\to\R^d$ under the assumption that $X$ is sufficiently smooth, namely that each component $X_1,\dots,X_d$ of $X$ has bounded variation.\footnote{Some readers might prefer to replace the bounded variation assumption with any other hypothesis that guarantees that the integral \eqref{defin di signature con gli iterated integrals} is well defined, for instance taking $X$ to be continuous and piecewise differentiable. As pointed out in \cite[Section 2]{DiehlReizenstein}, this definition of signature tensors can be extended to many different classes of paths, including Brownian motion.}
For every positive integer $k$ it is possible to define a tensor $\sigma^{(k)}(X)\in (\R^d)^{\ot k}$ whose $(i_1,\dots,i_k)$-th entry is the iterated integral
\begin{equation}\label{defin di signature con gli iterated integrals}
\sigma^{(k)}(X)_{i_1,\dots,i_k}=\int_{0}^{1}\int_{0}^{t_k}\dots\int_{0}^{t_3}\int_{0}^{t_2}\dot{X}_{i_1}(t_1)\cdots\dot{X}_{i_k}(t_k)dt_1\cdots dt_k.
\end{equation}
By convention we set $\sigma^{(0)}(X)=1$. The \emph{signature} of $X$ is the sequence
$\sigma(X)=(\sigma^{(k)}(X)\mid k\in\N)$.
By \cite[Theorem 4.1]{chenuniqueness}, $\sigma(X)$ allows to uniquely reconstruct $X$, up to a mild equivalence relation. Signatures and their uniqueness have been extended in \cite{HL10} and \cite{uniquenessrough} to much more general classes of paths. Just like moment tensors can encode probability distributions, the ability of signature tensors to efficiently store 
all the features of the path made them prominent in stochastic analysis, and in particular in the theory of rough paths \cite{lyons2014rough}.  

Recently, signatures are starting to attract the attention of researchers in very different areas of mathematics. For instance, topological data analysts found several ways to turn persistent homology barcodes into paths, and use signatures to extract their features. These methods match or outperform state-of-the-art techniques in texture, orbit or shape recognition  \cite{signaturesetopdata}. It is interesting to remark that in this approach the path is quickly forgotten, and the main focus is on its signature tensors.
Signature techniques also proved to be efficient and reliable in areas as diverse as medical statistics \cite{medical}, quantum field theory \cite{quantumfield}, 
machine learning \cite{machine} and cybersecurity \cite{cyber}, just to mention a few. The aforementioned examples show a growing presence of signature tensors in modern applied sciences and motivated us to start a deeper investigation of these objects from an algebraic viewpoint.

The interest of signature tensors in algebraic geometry started with \cite{AFS19}. However, many of their basic algebraic properties have not been studied yet. In this paper we focus on their \emph{rank}, \emph{symmetries} and \emph{conciseness}. 
The rank of a tensor $T$ is a 
natural number $\rk(T)$ that measures the complexity of $T$. We propose a systematic study of ranks of signature tensors, and the starting point is the following version of \cite[Corollary 7.3]{AGRSS}.

\begin{theorem}\label{thm: caratterizzazione segmenti AGRSS}Let $X$ be a path of bounded variation. The following are equivalent:
\begin{enumerate}
    \item $X$ is a segment.
\item $\rk(\sigma^{(k)}(X))=1$ for every $k\ge 1$.
\item $\sigma^{(k)}(X)$ is symmetric for every $k\ge 1$.
\end{enumerate}
\end{theorem}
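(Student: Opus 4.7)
The plan is to prove the cyclic chain $(1)\Rightarrow(2)\Rightarrow(3)\Rightarrow(1)$. For $(1)\Rightarrow(2)$, I would use the fact that the signature is invariant under monotone reparametrization, so without loss of generality $X(t)=X(0)+tv$ with $v=X(1)-X(0)$; then $\dot X\equiv v$ and the iterated integral in \eqref{defin di signature con gli iterated integrals} collapses to
\[
\sigma^{(k)}(X)=\frac{v^{\otimes k}}{k!},
\]
which manifestly has rank one (and is symmetric as well, giving $(1)\Rightarrow(3)$ as a byproduct).

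For $(2)\Rightarrow(3)$, my central tool would be the iterated shuffle identity
\[
\sigma^{(1)}_{i_1}\cdots\sigma^{(1)}_{i_k}=\sum_{\pi\in\SSS_k}\sigma^{(k)}_{i_{\pi(1)},\dots,i_{\pi(k)}},
\]
obtained inductively from the elementary relation $\sigma^{(a)}_I\sigma^{(b)}_J=\sum_{K\in I\shu J}\sigma^{(a+b)}_K$; equivalently, $\Sym(\sigma^{(k)})=(\sigma^{(1)})^{\otimes k}/k!$. Writing the rank-one tensor as $\sigma^{(k)}=u_1\otimes\cdots\otimes u_k$ and setting $v=\sigma^{(1)}$, I would evaluate $\Sym(\sigma^{(k)})$ as a multilinear form on the diagonal $(x,\dots,x)$. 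This yields the polynomial identity
\[
\prod_{j=1}^{k}\langle u_j,x\rangle=\frac{\langle v,x\rangle^{k}}{k!}
\]
in the dual variable $x$. Unique factorization in the polynomial ring forces every linear factor $\langle u_j,x\rangle$ to be proportional to $\langle v,x\rangle$, so each $u_j$ is a scalar multiple of $v$. Consequently $\sigma^{(k)}$ is a scalar multiple of $v^{\otimes k}$, and in particular symmetric.

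For $(3)\Rightarrow(1)$, symmetry of $\sigma^{(k)}$ together with $\Sym(\sigma^{(k)})=(\sigma^{(1)})^{\otimes k}/k!$ gives $\sigma^{(k)}(X)=v^{\otimes k}/k!$ for every $k$. By the computation in the first step this coincides with the signature of the straight segment from $X(0)$ to $X(0)+v$, so Chen's uniqueness theorem \cite{chenuniqueness} (up to the mild equivalence recalled in the introduction) identifies $X$ with that segment.

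The step I expect to require the most care is $(2)\Rightarrow(3)$: the factorial constants must be tracked precisely, and the conclusion that the $u_j$ are honest scalar multiples of $v$ has to come from unique factorization of the polynomial identity above, not merely a numerical match at one point. The other two implications reduce, respectively, to a direct computation of iterated integrals and to an appeal to the classical uniqueness theorem.
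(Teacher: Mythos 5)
Your argument is correct: the symmetrization identity $\mathrm{Sym}(\sigma^{(k)})=(\sigma^{(1)})^{\otimes k}/k!$ follows from the shuffle identity, the unique-factorization step (using that $\sigma^{(1)}\neq 0$ by the rank-one hypothesis at $k=1$) validly forces $\sigma^{(k)}=v^{\otimes k}/k!$, and the appeal to Chen's uniqueness closes the cycle. The paper does not reprove this statement but imports it as \cite[Corollary 7.3]{AGRSS}, noting only that the shuffle identity was the crucial ingredient there; your proof is built on exactly that tool, so it matches the intended route.
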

These statements can be considered prototypes, but they indicate a pattern: a very specific condition on the rank - all signatures of rank 1 - corresponds to a very specific family of paths, namely segments.
The purpose of this paper is to generalize \Cref{thm: caratterizzazione segmenti AGRSS} in several directions. In this process we would like to bring 
tools from multilinear algebra into signature tensor theory, and to contribute to the conversation between the signature community and the tensor decomposition community.

Since segments form a vary small class of paths, in \Cref{section: piecewise linear} we extend our study to \emph{piecewise linear paths}. This class is relatively simple, so piecewise linear paths are useful as a testing ground for the new theory. At the same time, the importance of their signatures is specifically addressed in \cite{lyonsxu}, \cite[Section 5]{AFS19} and \cite[Section 6]{PSS19}. Moreover, in applications people sometimes use \textit{time series} instead of paths \cite{DiehlReizenstein, moretimeseries}. A time series is a finite list of vectors of data, so effectively it is a piecewise linear path.  
For all these reasons, they are the natural place where to begin our investigation. In \Cref{theorem: bound rango tutte signatures} we give a bound on their rank by providing an explicit decomposition.

In \Cref{section: pure_volume} we consider a different generalization  of segments: the class of \emph{pure volume paths}. This will allow us to move a few steps in the world of rough paths, that is a central area of modern stochastic analysis \cite{frizhairer}.
While very different than segments, in \Cref{thm: pure volume} we show that their signatures enjoy some similar properties.

Looking back to \Cref{thm: caratterizzazione segmenti AGRSS}, it may be surprising to see the equivalence between the second and the third statement. This is a special feature of signatures tensors: 
according to \cite[Theorem 7.1]{AGRSS}, a signature is symmetric if and only if its rank is 1. In \Cref{section: symmetries} we relax the symmetry assumption and we look at \emph{partially symmetric} tensors, namely those which are invariant under permuting a subset of the indices, as well as \emph{skew-symmetric} tensors. In \Cref{prop: result_skew_signature} and \Cref{coroll: if k ge 4 ogni parzialmente simmetrico è simmetrico.} we show that these instances are also rare, as they may happen only at low levels of the signature.

In the tensor decomposition community, it is natural to study a tensor in its smallest possible embedding, or in other words to assume that it is \emph{concise}. However, signatures are not abstract tensors, in the sense that they come with a given embedding. 
For this reason it makes sense to ask what paths have concise signature tensors. The answer is \Cref{propos: coinciseness for paths}, that links conciseness of the signature to the fact that the image of $X$ lies on an hyperplane.

\subsection*{Significance for applications} Rank, symmetry and conciseness are most basic tensor  properties, so it is natural to study these features of signature tensors. In addition, our results also have the potential to improve and speed up computations with signature tensors, which are the objects that ultimately play the crucial role in applications. Most notably, rank is essential for data compression: storing a tensor in $(\R^d)^{\ot k}$
requires storing $d^k$
numbers, so it quickly grows infeasible when $k$ is large. On the other hand, storing a rank $r$ tensor only requires $d r k$ numbers. Thus knowing upper bounds on the rank allows us to beat the curse of dimensionality and to perform faster computations on such tensors. Moreover, in all results of \Cref{section: piecewise linear} we not only give a sharp upper bound on the rank, but we also provide a way to write down a minimal decomposition, that is tipically convenient for practical computations. We refer to \cite[Chapter 7]{Hackbusch2019} for a discussion on the importance of tensor rank from the algorithmic and numerical viewpoint. 

Likewise, exploiting symmetries is a standard way to reduce the number of parameters of a problem and make computations more efficient. Moreover, the kind of partial symmetry we study can only happen for signatures of order 3 by  \Cref{coroll: if k ge 4 ogni parzialmente simmetrico è simmetrico.}. This is especially relevant when it comes to numerically reconstruct the path from the third signature, like in \cite{PSS19}. Knowing when the third signature is partially symmetric has the potential to speed up such algorithms.

The same could be said of conciseness: in practical situations it is very convenient to work in the smallest tensor space, which is equivalent to perform the standard Tucker decomposition \cite[Chapter 8]{Hackbusch2019}. However, we would like to stress further potential consequences of our approach. \Cref{propos: coinciseness for paths} show that the signature of $X$ detects whether $X$ lies on a linear subspace. We believe that similar techniques will allow us to detects whether $X$ lies on a sphere or on some higher degree hypersurface. This would have considerable interest for the use of signatures in the data analysis community \cite{signaturesetopdata}.

\section{Shuffle identity and Thrall modules}\label{section: prelim}
Since the pioneering \cite{Chenoriginal} and \cite{chen1957integration}, signatures have been employed to study paths in $\R^d$. However, most of the arguments can be easily generalized to any field of characteristic 0. Since in this paper we use some tools from representation theory, sometimes we work over $\C$. For this reason we give our definitions over a field $\KK\in\{\R,\C\}$, and we write explicitly when we make a more precise assumption on the field. As we shall see, we prove that all our results hold over both $\mathbb{R}$ and $\mathbb{C}$.

The purpose of this section is to introduce the algebraic tools we use to study signature tensors, namely the shuffle identity and the Thrall modules. For the former, the textbook reference is \cite{Reutenauer}, while the latter have been introduced in \cite{AGRSS}. 

Let $V$ be a $\KK$-vector space of dimension $d$. If $X:[0,1]\to V$ is a 
path of bounded variation, then its signature is an element of the \emph{tensor algebra}
\[
\TT((V))=\KK\times V\times V^{\ot 2}\times V^{\ot 3}\times \cdots .
\]
This can be interpreted as the graded $\KK$-algebra of formal power series in $d$ non-commuting variables, and the multiplication is the tensor product. Elements of $\TT((V))$ are sequences of tensors. If $T\in\TT((V))$ and $i_1,\dots,i_k\in\{1,\dots,d\}$, then we denote by $T_{i_1,\dots,i_k}$ the entry $(i_1,\dots,i_k)$ of the order $k$ element of $T$. We consider $i_1,\dots,i_k$ as \emph{letters} in the alphabet $\{1,\dots,d\}$. A \emph{word} is an ordered sequence of letters. If $v$ and $w$ are words of the same length, then we formally define
\[T_{v+w}=T_v+T_w.\] By convention, the $0$-th entry of the signature of a path is 1, so it is convenient for us to define $$\TT_1((V))=\{T \in \TT((V))\mbox{ with constant entry } 1\}.$$
Analogously, we set $\TT_0((V))=\{T \in \TT((V)) \mbox{ with constant entry } 0\}.$
The tensor algebra has a remarkably rich structure. For instance, it admits a Lie bracketing
$
[T,S]=T\ot S-S\ot T$.
\begin{definition}\label{def: lie algebra}
We denote by $\Lie(V)$ the Lie subalgebra of $\TT_0((V))$ generated by $V$. In other words, it is the smallest vector subspace of $\TT_0((V))$ that contains $V$ and is closed under bracketings. 
If we set $\Lie^k V= \Lie(V) \cap V^{\ot k}$, then
$$\Lie(V)= \bigoplus_{k \in \NN} \Lie^k V$$
is a graded vector space.
Denote by $\Lie((V))= \KK\times\Lie^1(V)\times \Lie^2(V)\times\dots\subset \TT_0((V))$.
\end{definition}
By using this power series notation, we can define the exponential of elements of the tensor algebra.
\begin{definition}\label{definition: exp}
Define $\exp:\TT_0((V))\to \TT_1((V))$ by the formal power series
\[\exp(T)=\sum_{n=0}^{\infty}\frac{T^{\otimes n}}{n!}.\]
The image of $\Lie((V))$ under $\exp$ is denoted by $\G(V)$.
\end{definition}
There is a more combinatorical definition of $\G(V)$, for which we need to introduce another operation between words. This is called shuffle, and it plays an important role for signatures.
We denote by $v\cdot w$ the word obtained by writing $v$ followed by $w$.
\begin{definition}
The \textit{shuffle product} of two words $v$ and $w$, denoted by $v\shuffle w$, is the formal sum of all order-preserving interleavings of them. More precisely, the shuffle is defined recursively. If $i$ and $j$ are letters, then
\begin{align*}
(v\cdot i) \shuffle (w\cdot j) = \left(v\shuffle (w\cdot j)\right)\cdot i + \left((v\cdot i)\shuffle w\right)\cdot j.
\end{align*}
As an example, $12\shuffle 34=1234+1324+1342+3124+3142+3412$.
\end{definition}
The relation between this operation and signature tensors is the \emph{shuffle identity}. 

\begin{lemma}\label{lem:shuffle identity}
If $\sigma$ is the signature of a bounded variation path $X:[0,1]\to V$, then $\sigma
_v\cdot\sigma
_w= \sigma
_{v\shuffle w}	$ for all words $v$ and $w$.
\end{lemma}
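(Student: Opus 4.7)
The plan is to prove the shuffle identity by induction on the total length $n+m$ of the two words, using integration by parts in the form of a time-derivative identity for truncated signatures. This is the standard Ree/Chen argument, and the recursive definition of the shuffle product lines up neatly with the recursion obtained by differentiating an iterated integral in its upper limit.

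First I would introduce the one-parameter family of \emph{partial signatures}: for $t\in[0,1]$ and a word $u=u_1\cdots u_n$, set
\[
\sigma_u^{\,t}=\int_{0<t_1<\cdots<t_n<t}\dot X_{u_1}(t_1)\cdots\dot X_{u_n}(t_n)\,dt_1\cdots dt_n,
\]
so $\sigma_u=\sigma_u^{\,1}$. The key observation is that $\sigma_u^{\,t}$ is absolutely continuous in $t$ with
\[
\frac{d}{dt}\sigma_u^{\,t}=\sigma_{u'}^{\,t}\cdot\dot X_{u_n}(t),
\]
where $u'=u_1\cdots u_{n-1}$ is $u$ with its last letter removed (and $\sigma_{\emptyset}^{\,t}=1$). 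This follows directly from the Fundamental Theorem of Calculus applied to the outermost integration. The base cases of the induction are trivial: when either word is empty, $\sigma_\emptyset=1$ and $v\shuffle\emptyset=v$, so both sides agree.

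For the inductive step, fix $v=v'\cdot i$ and $w=w'\cdot j$ with $|v|+|w|\ge 2$, and assume the identity holds for all pairs of words of smaller total length, with the truncated signatures $\sigma^{\,t}$ in place of $\sigma$ (the inductive hypothesis applies at every $t$ by the same argument at time $t$). Apply Leibniz's rule:
\[
\frac{d}{dt}\bigl(\sigma_v^{\,t}\cdot\sigma_w^{\,t}\bigr)
=\sigma_{v'}^{\,t}\sigma_w^{\,t}\,\dot X_i(t)+\sigma_v^{\,t}\sigma_{w'}^{\,t}\,\dot X_j(t).
\]
Integrating from $0$ to $1$ and using the inductive hypothesis to rewrite $\sigma_{v'}^{\,t}\sigma_w^{\,t}=\sigma_{v'\shuffle w}^{\,t}$ and $\sigma_v^{\,t}\sigma_{w'}^{\,t}=\sigma_{v\shuffle w'}^{\,t}$ yields
\[
\sigma_v\sigma_w=\sigma_{(v'\shuffle w)\cdot i}+\sigma_{(v\shuffle w')\cdot j}=\sigma_{(v'\shuffle w)\cdot i+(v\shuffle w')\cdot j},
\]
where in the last step I use the linearity convention $T_{u+u'}=T_u+T_{u'}$ introduced in the paper. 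By the recursive definition of the shuffle product, the right-hand side is exactly $\sigma_{v\shuffle w}$, closing the induction.

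The main point where care is needed is the interplay between the recursion for $\shuffle$ and the recursion produced by differentiating in $t$: both split a word by removing its \emph{last} letter, which is why the two match up cleanly. A secondary technical issue is justifying the differentiation of $\sigma_u^{\,t}$ under the bounded-variation hypothesis; this is routine since the inner iterated integrals are themselves bounded-variation functions of $t$, and Leibniz's rule is valid for products of absolutely continuous functions, which is exactly the regularity provided by the assumption on $X$.
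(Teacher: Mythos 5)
Your proof is correct: it is the classical Ree--Chen argument (induction on $|v|+|w|$ combined with differentiating the truncated signatures $\sigma^{\,t}$ and Leibniz's rule), and the recursion you obtain matches the paper's recursive definition of $\shuffle$ exactly. The paper itself states this lemma without proof, deferring to the textbook reference \cite{Reutenauer}, whose proof is essentially the one you give; the only point worth flagging is that for genuinely bounded-variation (rather than absolutely continuous) paths one should read the integrals as Riemann--Stieltjes integrals against $dX$, for which the same differentiation/integration-by-parts argument goes through.
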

The shuffle identity had a crucial role in the proof of \Cref{thm: caratterizzazione segmenti AGRSS} and will be one of our main tools in this paper because it gives relations among entries at different levels of a signature. It also provides a characterization of elements of $\G(V)$. Indeed,
\[
\G(V)=\{T\in \TT_1((V))\mid T_v\cdot T_w= T_{v\shuffle w}	\mbox{ for all words $v$ and $w$}\}.
\]
Since the signature of a  
path of bounded variation belongs to $\G(V)$, we can think of an element of $\G(V)$ as the generalization of the signature of a path. This is also the reason why elements of $\Lie((V))$ are called \emph{log-signatures}. Indeed, we will phrase most of our results in terms of log-signatures, rather than in terms of paths.

Besides the shuffle identity, in this paper we will systematically employ the \emph{Thrall decomposition}, introduced in \cite[Section 3]{AGRSS}, that will allow us to apply some basic techniques from representation theory. First, we need to write down the exponential map with some more details.
\begin{definition}\label{definition: phi_k}\label{definition: Thrall modules}
Let $p_k$ be the projection $\TT_1((V)) \to V^{\ot k}$ onto the $k$-th factor. Define $\f_k=p_k\circ \exp:\Lie((V)) \to V^{\ot k}$ to be 
\begin{align}
\f_k((T_{(i)}\mid i\in\N))= \sum{\frac{1}{t!} T_{(\alpha_1)} \ot \cdots \ot T_{(\alpha_{t})}},
\end{align}
where the sum is over all tuples of positive integers
$(\alpha_1,\ldots,\alpha_{t})$ such that $\alpha_1+\cdots +\alpha_{t} = k$. Now we decompose $\f_k$ as a sum of functions indexed by partitions of $k$. If $\lambda\vdash k$, define $f_{\lambda}: \Lie((V)) \to
     V^{\ot k}$ by
\begin{align}\label{eq: map f_lambda}
    f_{\lambda}((T_{(i)}\mid i\in\N))= \sum_{\alpha \in \compositions(\lambda)}\frac{1}{t!}{T_{(\alpha_1)} \ot \cdots \ot T_{(\alpha_{t})}},\nonumber
\end{align}
where $P(\lambda)$ is the set of distinct permutations of $\lambda$. Partitions of $k$ also 
allow us to define the \emph{Thrall modules}: if $a_i(\lambda)$ is the number of times the integer $i$ appears in $\lambda$, then let
\[W_{\lambda}(V) = \Sym^{a_1(\lambda)}(\Lie^1(V)) \otimes \cdots \otimes \Sym^{a_{k}(\lambda)}(\Lie^k(V)).\]
\end{definition}
By \cite[Theorem 3.2]{AGRSS} we know that the Thrall modules decompose the tensors space
\[V^{\ot k}= \bigoplus_{\lambda \vdash k}{W_{\lambda}(V)}\]
as a direct sum of $\GL(V)$-representations. The proof is given over $\C$, but it carries over to $\R$ as well, see \cite[Section 8.5]{Reutenauer}. 
There are many questions about the Thrall decomposition, for instance how it compares to the classical Schur decomposition
. For us, the Thrall decomposition is a useful instrument to deal with signatures, so we refer to \cite{AGRSS} for a more general discussion. However, just to give a taste of the interaction between Thrall modules and signatures, now we show how most of the $W_\lambda(V)$ do not contain signatures.

\begin{theorem}\label{theorem: empty_thrall_characterization}
Let $\ell=(T_{(i)}\mid i\in\N)\in\Lie((V))$ be a log-signature and let $\lambda=(\lambda_1,\dots,\lambda_s)$ be a partition of $k$ such that $\lambda$ has two distinct entries. Supose that $\f_k(\ell)\in W_\lambda(V)$. If there exists $i\in\{1,\dots,s\}$ such that $\lambda_i\mid k$, then $\f_k(\ell)=0$.
\end{theorem}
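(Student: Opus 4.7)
The plan is to leverage the direct sum decomposition $V^{\ot k}=\bigoplus_{\mu\vdash k}W_\mu(V)$ of \cite[Theorem 3.2]{AGRSS}, together with the fact that every summand $f_\mu(\ell)$ of $\f_k(\ell)=\sum_{\mu\vdash k}f_\mu(\ell)$ lies in $W_\mu(V)$. Under this setup, the hypothesis $\f_k(\ell)\in W_\lambda(V)$ immediately forces $f_\mu(\ell)=0$ for every partition $\mu\neq\lambda$ of $k$, and in particular $\f_k(\ell)=f_\lambda(\ell)$. This is the vanishing mechanism the proof will exploit.

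The key move will be choosing a useful auxiliary partition. Let $m=\lambda_i$ be the part of $\lambda$ which divides $k$, and set $j=k/m$. I would consider the rectangular partition $\mu=(m^j)\vdash k$. Since $\lambda$ has two distinct entries by hypothesis, $\lambda\neq(m^j)=\mu$, so the vanishing just established applies to this $\mu$. The point of the rectangular choice is that $P(\mu)$ reduces to the single composition $(m,\dots,m)$, so the defining sum of $f_\mu(\ell)$ collapses to
\[
f_\mu(\ell)=\frac{1}{j!}\,T_{(m)}^{\ot j}.
\]
The identity $f_\mu(\ell)=0$ therefore forces $T_{(m)}^{\ot j}=0$, and hence $T_{(m)}=0$, since over a field the vanishing of a tensor power of an element is equivalent to the vanishing of the element itself.

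To close the argument, I would use that $m=\lambda_i$ is one of the parts of $\lambda$, so every composition $\alpha\in P(\lambda)$ has some entry $\alpha_{i'}$ equal to $m$. Each summand of
\[
f_\lambda(\ell)=\sum_{\alpha\in P(\lambda)}\frac{1}{s!}\,T_{(\alpha_1)}\ot\cdots\ot T_{(\alpha_s)}
\]
thus contains the factor $T_{(m)}=0$, so $\f_k(\ell)=f_\lambda(\ell)=0$, as required. I do not foresee a real obstacle: the argument hinges entirely on identifying the right auxiliary partition $\mu$, and once this combinatorial choice is made the collapse of $P(\mu)$ to a singleton trivialises the rest.
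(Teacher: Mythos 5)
Your proposal is correct and follows exactly the paper's own argument: the same rectangular auxiliary partition $\mu=(\lambda_i,\dots,\lambda_i)$, the same use of the directness of the Thrall decomposition to kill $f_\mu(\ell)$ and hence $T_{(\lambda_i)}$, and the same observation that every summand of $f_\lambda(\ell)$ contains the factor $T_{(\lambda_i)}$. No gaps.
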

\begin{proof}
Since $\lambda_i\mid k$, we can consider the partition $\mu=(\lambda_i,\lambda_i,\dots,\lambda_i)$ of $k$. By hypothesis $\lambda\neq\mu$. Since $\f_k(\ell)\in W_\lambda(V)$ and the Thrall modules are in direct sum, we deduce that $f_\mu(\ell)=0$. But $f_\mu(\ell)$ is a power of $T_{(\lambda_i)}$, hence $T_{(\lambda_i)}=0$. In order to conclude that $\f_k(\ell)=0$, it is enough to observe that each 
summand of $\varphi_k(\ell)=f_\lambda(\ell)$ is a multiple of $T_{(\lambda_i)}$, because $\lambda_i$ is one of the entries of $\lambda$.
\end{proof}

\begin{example}\label{example: partition (2,3,6)}
The converse of \Cref{theorem: empty_thrall_characterization} does not hold. Consider for instance $k=11$ and $\lambda=(2,3,6)$. While no entry of $\lambda$ divides $k$, there is no nonzero signature in $W_{(2,3,6)}$. Indeed, assume by contradiction that 
there exists a log-signature 
$\ell=(T_{(1)},T_{(2)},\dots)\in\Lie((V))$ 
such that 
$\f_{11}(\ell)\in W_{(2,3,6)}\setminus \{0\}$. This means that $0\neq \f_{11}(\ell)= f_{(2,3,6)}(\ell)$, and so 
$T_{(2)}$, $T_{(3)}$ and $T_{(6)}$ 
are nonzero. But then $\f_{11}(\ell)$ has a nonzero component in several different Thrall modules, because for instance $f_{(2,3,3,3)}(\ell)$ is nonzero. That contradicts the fact that $\f_{11}(\ell)\in W_{(2,3,6)}$. 
\end{example}

We close this section by recalling the action of $\GL(V)$ on signature tensors.

\begin{remark}\label{remark: possiamo agire con GL}
The natural action of $\GL(V)$ on $V$ extends to an action of $\GL(V)$ on $V^{\ot k}$ by
\[
M\cdot (v_1\ot\cdots\ot v_k)= (M\cdot v_1)\ot\cdots\ot(M\cdot v_k).
\]
The group $\GL(V)$ also acts on paths by composition, namely if $X$ is a path and $M\in\GL(V)$, then we can define another path $M\cdot X:[0,1]\to V$ by
\[t\mapsto M\cdot (X_1(t),\dots,X_d(t)).\] 
By \cite[Theorem 3.1]{chen1957integration}, 
$\sigma^{(k)}(M\cdot X)$ and $\sigma^{(k)}(X)$ have the same rank. See also \cite[Lemma 3.3]{DiehlReizenstein} for a modern reference.
\end{remark}

\section{Piecewise linear paths}\label{section: piecewise linear}
In this section we study what happens when, in the statement of \Cref{thm: caratterizzazione segmenti AGRSS}, we replace the class of segments with the larger class of piecewise linear paths. These are more complicated that segments, and their signatures will have rank greater than 1. Nevertheless, piecewise linear paths can still  be considered relatively simple, so we expect their rank to be small.

\begin{definition}\label{def: decomposition and rank}
A tensor $T\in (\KK^d)^{\ot k}$ is called {\it elementary} if there exist $v_1,\ldots,v_k\in \KK^d$ such that $T=v_1\ot\cdots\ot v_k$. A \textit{decomposition} of $T$ is a way to write
 \begin{equation*}\label{eq: tensor decomposition}
T= T_1+\cdots +T_r
 \end{equation*}
as a sum of elementary tensors. The \textit{rank} of $T$, denoted by $\rk(T)$, is the minimum length of a decomposition of $T$. 
\end{definition}

As pointed out in \cite[Section 8]{nphard}, in general it is very difficult to compute the rank of a given tensor, so in this section we provide an upper bound. In some small cases we employ flattening methods to actually compute the exact value. The rank of a tensor may well depend on the field $\KK$. In this paper we provide bounds on the rank by exhibiting explicit decompositions, hence our results are independent on the ground field. 

\begin{remark}\label{rmk: many possible definitions of rank}
\Cref{def: decomposition and rank} goes back to \cite{hitchcock}, but the last century has seen the introduction of several other possible generalizations of the matrix rank. Examples include the \emph{border rank} \cite[Section 1.2.6]{Lan}, the \emph{multilinear rank} \cite[Definition 2.3.1.4]{Lan}, the \emph{geometric rank} \cite{geometricrank} and the \emph{tensor train rank} \cite{tensortrain}. It would certainly be worth it to study different ranks of signature tensors. In this paper we deal with the rank as in \Cref{def: decomposition and rank}.
\end{remark}
By definition, a piecewise linear path is a concatenation of segments. 

\begin{definition}\label{definition: segment}
If $v\in V$ is a vector, we can consider the path $X:[0,1]\to V$ given by
\begin{equation}\label{equation: segment}
    t\to tv.
\end{equation}
By \cite[Theorem 4.1]{chenuniqueness}, the signature does not change under translation nor reparametrization of $X$, so we define the \emph{segment} associated to $v$ to be a smooth path equivalent to \eqref{equation: segment}, up to translation or reparametrization. Applying \eqref{defin di signature con gli iterated integrals}, it is an easy exercise to see that the signature of this segment is 
\begin{equation}\label{eq: signature of a segment}
\left(\frac{v^{\ot k}}{k!}\mid k\in\N\right).
\end{equation}
\end{definition}

\begin{definition}\label{def: concatenation pw linear path} If $X,Y:[0,1]\to V$ are 
continuous paths, then their \emph{concatenation} is the path
\[(X\sqcup Y)(t)=\begin{cases}X(2t) & \mbox{ if } t\in \left[0,\frac12\right],\\X(1)-Y(0)+Y(2t-1) & \mbox{ if } t\in \left[\frac12,1\right].\end{cases}\]
A path $X$ is \emph{piecewise linear} if there exist segments $X_1,\dots,X_m$ such that $X=X_1\sqcup\dots\sqcup X_m$.
\end{definition}

The formula to compute the signature of the concatenation of two paths, known as Chen's identity, was proven in \cite[Theorem 2.1]{chen1957integration} for path in $\R^d$ and extended in \cite[Theorem 5.1]{chen1957integration} to paths in $\C^d$. It is a striking link between paths and the structure of the tensor algebra.

\begin{theorem}\label{them: concatenation theorem} If $X,Y:[0,1]\to V$ are paths of bounded variation, then $\sigma(X\sqcup Y)=\sigma(X)\ot\sigma(Y)$.
\end{theorem}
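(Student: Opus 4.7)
The plan is to unwind the definitions entry-wise. Fix a multi-index $(i_1,\dots,i_k)$ and rewrite the $(i_1,\dots,i_k)$-entry of $\sigma^{(k)}(X\sqcup Y)$ using \eqref{defin di signature con gli iterated integrals} as an iterated integral over the simplex $\Delta_k=\{0\le t_1\le\dots\le t_k\le 1\}$. The first step is to split $\Delta_k$ according to the unique index $j\in\{0,\dots,k\}$ for which $t_j\le \tfrac12\le t_{j+1}$ (with the conventions $t_0=0$ and $t_{k+1}=1$). This partitions the integral into a sum over $j$ of integrals on $\{0\le t_1\le\dots\le t_j\le\tfrac12\}\times\{\tfrac12\le t_{j+1}\le\dots\le t_k\le 1\}$, and the boundary slice $\{t_j=\tfrac12\}$ has measure zero, so it can be absorbed into either side.

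Next I would use the piecewise formula of \Cref{def: concatenation pw linear path} to compute $\dot{(X\sqcup Y)}$ on each half: it equals $2\dot X(2t)$ on $[0,\tfrac12]$ and $2\dot Y(2t-1)$ on $[\tfrac12,1]$. The change of variables $s=2t$ on the first half and $s=2t-1$ on the second turns each half of the integral into an iterated integral on $[0,1]$; crucially, the chain-rule factors of $2$ exactly cancel the Jacobians. The contribution of the region indexed by $j$ therefore separates as the product
\[
\sigma^{(j)}(X)_{i_1,\dots,i_j}\cdot\sigma^{(k-j)}(Y)_{i_{j+1},\dots,i_k},
\]
and summing over $j$ gives
\[
\sigma^{(k)}(X\sqcup Y)_{i_1,\dots,i_k}=\sum_{j=0}^{k}\sigma^{(j)}(X)_{i_1,\dots,i_j}\cdot\sigma^{(k-j)}(Y)_{i_{j+1},\dots,i_k}.
\]

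The right-hand side is precisely the $(i_1,\dots,i_k)$-entry of the degree $k$ component of $\sigma(X)\otimes\sigma(Y)$ in $\TT((V))$, because the multiplication in the tensor algebra is given in each degree by exactly this ``convolution'' of the factors. Since $k$ and the multi-index were arbitrary, the identity $\sigma(X\sqcup Y)=\sigma(X)\otimes\sigma(Y)$ follows.

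The bulk of the argument is really bookkeeping rather than a deep idea: there is no cancellation to engineer, no representation theory to invoke, only the correct splitting of $\Delta_k$ and a change of variables. For bounded variation paths the iterated integrals are well defined as Riemann--Stieltjes integrals and Fubini applies on each piece, so the main obstacle I expect is making the region decomposition of $\Delta_k$ precise enough that every term is assigned to exactly one summand, and verifying that the parametrisation constants coming from the factor $2$ in $\dot{(X\sqcup Y)}$ conspire with the Jacobians to give a clean product.
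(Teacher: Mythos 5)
Your argument is correct: the decomposition of the simplex $\Delta_k$ by the index $j$ with $t_j\le\tfrac12\le t_{j+1}$, the cancellation of the reparametrisation factor $2$ against the Jacobian, and the identification of the resulting convolution formula with the degree-$k$ component of the product in $\TT((V))$ is exactly the classical proof of Chen's identity (the translation term $X(1)-Y(0)$ in \Cref{def: concatenation pw linear path} drops out upon differentiation, as you implicitly use). The paper does not prove this statement at all -- it cites Chen's original theorem -- so your proposal supplies the standard argument that the cited reference contains.
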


If $X_1,\dots,X_m$ are segments, then by \Cref{them: concatenation theorem} the $k$-th signature of a piecewise linear path is 
\begin{equation}\label{eq: weak naif bound on the rank of a pw linear path}
    \sigma^{(k)}(X_1\sqcup\dots\sqcup X_m)=\sum_{
a_1+\cdots+a_m=k}
\sigma^{(a_1)}(X_1)\ot\cdots\ot\sigma^{(a_m)}(X_m).
\end{equation}
By \Cref{thm: caratterizzazione segmenti AGRSS}, each of the $\binom{m+k-1}{k}$ summands of \eqref{eq: weak naif bound on the rank of a pw linear path} is a rank one tensor. This explicit decomposition tells us that $\rk(\sigma^{(k)}(X_1\sqcup\dots\sqcup X_m))\le \binom{m+k-1}{k}$. Hence, for a fixed value of $k$, the rank is bounded by a polynomial of degree $k$ in the variable $m$. The rest of the section is devoted to improving this bound to a polynomial of degree $k-2$. We start by dealing with low values of $m$.

\begin{proposition}\label{prop:rank_2pieces}\label{propos: m=2}Let $X_1,X_2$ be segments and call $u=\sigma^{(1)}(X_1)$ and $v=\sigma^{(1)}(X_2)$. If $k=2s+1$ is odd, then
\begin{align}\label{eq:odd_signature_2pieces}
\sigma^{(2s+1)}(X_1\sqcup X_2)= \sum_{j=0}^{s} \frac{u^{\otimes 2j}}{(2j)!} \otimes   \left(\frac{u}{2j+1}+\frac{v}{2s-2j+1} \right)
   \otimes \frac{v^{\otimes 2s-2j}}{(2s-2j)!}.
\end{align}
If $k=2s$ is even, then
\begin{align}\label{eq:even_signature_2pieces}
\sigma^{(2s)}(X_1\sqcup X_2)=\sum_{j=1}^s \frac{u^{\otimes 2s-2j+1}}{(2s-2j+1)!}\otimes \left( \frac{u}{2s-2j+2}+\frac{v}{2j-1}\right)\otimes \frac{v^{\otimes 2j-2}}{(2j-2)!}+ \frac{v^{\ot k}}{k!}.
\end{align}
In particular,
$\rk (\sigma^{(k)}(X_1\sqcup X_2))\le \lceil \frac{k+1}{2}\rceil$ and equality holds if and only if $u$ and $v$ are linearly independent.
\end{proposition}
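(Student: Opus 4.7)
The plan is to derive the explicit decompositions \eqref{eq:odd_signature_2pieces} and \eqref{eq:even_signature_2pieces} first, which immediately yield the upper bound on the rank, and then establish sharpness via a carefully chosen flattening.

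I would start by combining Chen's identity (\Cref{them: concatenation theorem}) with the segment formula \eqref{eq: signature of a segment} to write
\[\sigma^{(k)}(X_1 \sqcup X_2) = \sum_{a+b=k} \frac{u^{\otimes a}}{a!} \otimes \frac{v^{\otimes b}}{b!},\]
which exhibits the signature as a sum of $k+1$ elementary tensors. The key trick is to merge consecutive summands via the elementary identity
\[\frac{u^{\otimes a}}{a!} \otimes \frac{v^{\otimes b}}{b!} + \frac{u^{\otimes (a+1)}}{(a+1)!} \otimes \frac{v^{\otimes (b-1)}}{(b-1)!} = \frac{u^{\otimes a}}{a!} \otimes \left(\frac{u}{a+1} + \frac{v}{b}\right) \otimes \frac{v^{\otimes (b-1)}}{(b-1)!},\]
whose right-hand side is again elementary. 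For $k = 2s+1$ odd, pairing $(a = 2j,\, a = 2j+1)$ for $j = 0, \ldots, s$ produces the $s+1$ summands of \eqref{eq:odd_signature_2pieces}; for $k = 2s$ even, pairing $(a = 2j-1,\, a = 2j)$ for $j = 1, \ldots, s$ and leaving the $a=0$ term as the trailing $v^{\otimes k}/k!$ produces \eqref{eq:even_signature_2pieces}. In both cases the number of elementary summands is $\lceil (k+1)/2 \rceil$, which gives the upper bound.

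For the sharpness, when $u$ and $v$ are linearly dependent the image of $X_1 \sqcup X_2$ lies in a line through the origin, so every $\sigma^{(k)}(X_1 \sqcup X_2)$ lies in a one-dimensional subspace of $V^{\otimes k}$ and has rank at most $1$, strictly less than $\lceil (k+1)/2 \rceil$ for $k \geq 2$. For the converse, I would assume $u, v$ linearly independent and consider the reshaping of $\sigma^{(k)}(X_1 \sqcup X_2)$ that groups the odd-indexed modes $\{1,3,5,\ldots\}$ on the row side and the even-indexed modes $\{2,4,6,\ldots\}$ on the column side. A direct check shows that the summand $u^{\otimes a} \otimes v^{\otimes (k-a)}$ reshapes as
\[\bigl(u^{\otimes \lceil a/2 \rceil} \otimes v^{\otimes \lceil k/2 \rceil - \lceil a/2 \rceil}\bigr) \otimes \bigl(u^{\otimes \lfloor a/2 \rfloor} \otimes v^{\otimes \lfloor k/2 \rfloor - \lfloor a/2 \rfloor}\bigr),\]
so as $a$ ranges over $0, \ldots, k$ the pair $(\lceil a/2 \rceil, \lfloor a/2 \rfloor)$ traces a monotone staircase. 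Since $u, v$ are linearly independent, the families $\{u^{\otimes j} \otimes v^{\otimes \lceil k/2 \rceil - j}\}_j$ and $\{u^{\otimes j} \otimes v^{\otimes \lfloor k/2 \rfloor - j}\}_j$ are linearly independent in their respective tensor powers, and in these bases the flattening becomes a lower bidiagonal matrix with strictly positive main-diagonal entries $\tfrac{1}{(2j)!(k-2j)!}$ and strictly positive subdiagonal entries $\tfrac{1}{(2j+1)!(k-2j-1)!}$. Its rank is easily seen to be exactly $\lceil (k+1)/2 \rceil$; since the rank of a flattening is a lower bound on the tensor rank, this forces equality.

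The step I expect to require the most care is the bookkeeping of the indices $\lceil a/2 \rceil$ and $\lfloor a/2 \rfloor$ in the zigzag flattening, especially verifying the $(s+1) \times (s+1)$ versus $(s+2) \times (s+1)$ matrix shapes for the two parities of $k$. Once the staircase pattern is in place, the bidiagonal structure gives linear independence of the columns essentially for free and hence the desired lower bound.
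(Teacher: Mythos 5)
Your proposal is correct and follows essentially the same route as the paper: the same Chen-identity expansion, the same pairwise merging of consecutive elementary summands into $\lceil (k+1)/2\rceil$ terms, and the same odd/even-mode flattening for the lower bound. The only cosmetic difference is that you verify the flattening's rank by exhibiting a bidiagonal coefficient matrix in the staircase basis, whereas the paper directly lists its $s+1$ linearly independent nonzero columns.
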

\begin{proof}
By \eqref{eq: signature of a segment} we know that $\sigma^{(i)}(X_1)=\frac{1}{i!}u^{\otimes i}$ and $\sigma^{(i)}(X_2)=\frac{1}{i!}v^{\otimes i}$.
\Cref{them: concatenation theorem} gives
\begin{align*}
\sigma^{(k)}(X_1\sqcup X_2)= \sum_{j=0}^{k} \sigma^{(j)}(X_1)\otimes \sigma^{(k-j)}(X_2)
=\sum_{j=0}^{k} \frac{1}{j!}u^{\otimes j}\otimes \frac{1}{(k-j)!}v^{\otimes k-j}.
\end{align*}
If $k=2s+1$ is odd, then $\sigma^{(2s+1)}(X_1\sqcup X_2)$ is the sum of $2s+2$ rank one tensors that we can group together in pairs to obtain \eqref{eq:odd_signature_2pieces}.
If $k=2s$ is even, again we can group in pairs all the rank one tensors except the last one and obtain \eqref{eq:even_signature_2pieces}.
In both cases we have a sum of $s+1$ rank one tensors, that gives the required bound on the rank.  
Now, if $X_1$ and $X_2$ are not linearly independent, then $X_1\sqcup X_2$ is a segment and its signatures all have rank 1 by \Cref{thm: caratterizzazione segmenti AGRSS}. Hence we assume that they are independent and we prove that decompositions \eqref{eq:odd_signature_2pieces} and \eqref{eq:even_signature_2pieces} are minimal. 

Observe that $\sigma^{(k)}(X_1\sqcup X_2)\in \langle u,v\rangle^{\ot k}$, so, in order to study its rank, it is not restrictive to assume that $d=2$. In order to prove that $\rk (\sigma^{(k)}(X_1\sqcup X_2))\ge \ru{\frac{k+1}{2}}$, we will employ \emph{flattenings}. Flattening a tensor $T\in V_1\ot\dots\ot V_k$ means choosing two disjoint sets of indices $I_1$ and $I_2$ such that $\{1,\dots,k\}=I_1\cup I_2$ and look at $T$ as the matrix  in $$\left(\bigotimes_{i\in I_1}V_i\right)\ot \left(\bigotimes_{j\in I_2}V_j\right)\mbox{ associated to corresponding linear map } \bigotimes_{j\in I_2}V_j^*\to \bigotimes_{i\in I_1}V_i.$$
As explained in \cite[Section 3.4]{Lan}, the rank of the matrix is a lower bound on the rank of $T$. Assume that $k=2s+1$ is odd. Notice that in \eqref{eq:odd_signature_2pieces} the term $\frac{u}{2j+1}+\frac{v}{2s-2j+1}$ always appears in an odd factor, so we consider $I_1$ and $I_2$ to be the set of odd and even indices, respectively. 
The associated matrix $M$ has $2^{s}$ columns, indexed by all the basis elements of $\langle u^*,v^*\rangle^{\ot s}$. By construction, $M$ has $s+1$ nonzero columns, the one corresponding to the basis elements $\{(u^*)^{\ot j}\ot (v^*)^{\ot s-j}\mid j\in\{0,\dots,s\}\}$. These $s+1$ column vectors
are
\[
\left\{\frac{u^{\otimes j}}{(2j)!} \otimes   \left(\frac{u}{2j+1}+\frac{v}{2s-2j+1} \right)
   \otimes \frac{v^{\otimes s-j}}{(2s-2j)!}\mid j\in\{0,\dots,s\}
\right\}.
\]
These vectors are linearly independent, hence $\rk(\sigma^{(2s+1)}(X_1\sqcup X_2))\ge\rk(M)=s+1$. We treat the case $k=2s$ in a similar way. Observe that in \eqref{eq:even_signature_2pieces} the term $\frac{u}{2s-2j+2}+\frac{v}{2j-1}$ always appears in an even factor, so we switch the roles of $I_1$ and $I_2$ and we flatten $\sigma^{(2s)}(X_1\sqcup X_2)$ as a linear map
\[\bigotimes_{i\mbox{\tiny{ odd}}}\langle u,v\rangle^* \to \bigotimes_{i\mbox{\tiny{ even}}}\langle u,v\rangle.\]
As before, the matrix $M$ associated to this linear map has $s+1$ nonzero columns
\begin{align*}
\left\{
\frac{u^{\otimes s-j}}{(2s-2j+1)!}\otimes \left( \frac{u}{2s-2j+2}+\frac{v}{2j-1}\right)\otimes \frac{v^{\otimes j-1}}{(2j-2)!}\mid j\in\{1,\dots,s\}
\right\}\cup \left\{\frac{v^{\ot s}}{(2s)!} \right\},
\end{align*}
which are linearly independent. Hence this flattening gives the required bound $\rk(\sigma^{(2s)}(X_1\sqcup X_2))\ge s+1$.
\end{proof}

\begin{example}We illustrate \Cref{propos: m=2} for $k=5$. 
We decompose
\begin{align*}
\sigma^{(5)}(X_1\sqcup X_2)&=\frac{u^{\otimes5}}{5!}+\frac{u^{\otimes 4}\otimes v}{4!}+\frac{u^{\otimes 3}\otimes v^{\otimes 2}}{3!2!}+\frac{u^{\otimes 2}\otimes v^{\otimes 3}}{2!3!}+\frac{u\otimes v^{\otimes 4}}{4!}+\frac{v^{\otimes 5}}{5!}\\&
=\frac{u^{\otimes 4}}{4!}\otimes \left(\frac{u}{5}+v\right)+\frac{u^{\otimes 2}}{2!}\otimes \left(\frac{u}{3}+\frac{v}{3}\right)\otimes \frac{v^{\otimes 2}}{2!}+\left(u+\frac{v}{5}\right)\otimes \frac{v^{\otimes 4}}{4!}
\end{align*}
as a sum of $3=\ru{\frac{5+1}{2}}$ elementary tensors. In each summand of the decomposition we collect the first, third and fifth factors on one side, and the remaining two factors on the other side. In other words we flatten $\sigma^{(5)}(X_1\sqcup X_2)$ as the $8\times 4$ matrix
\begin{align*}
M&=\left(\frac{u^{\otimes 2}}{4!}\otimes \left(\frac{u}{5}+v\right)\right)\ot
u^{\otimes 2}+\left(
\frac{u}{2!}\otimes \left(\frac{u}{3}+\frac{v}{3}\right)\otimes \frac{v}{2!}\right)\ot
(u\otimes v)+\left(\left(u+\frac{v}{5}\right)\otimes \frac{v^{\otimes 2}}{4!}\right)\ot
v^{\otimes 2},
\end{align*}
associated to the linear map $\langle u^*, v^*\rangle^{\ot 2}\to \langle u, v\rangle^{\ot 3}$ sending
\begin{center}
    \begin{tabular}{lll}
$u^*\otimes u^* \mapsto \frac{u^{\otimes 2}}{4!}\otimes \left(\frac{u}{5}+v\right)$     &$\quad$ &  $u^*\otimes v^*\mapsto\frac{u}{2!}\otimes\left(\frac{u}{3}+\frac{v}{3}\right)\otimes\frac{v}{2!}$\\
$v^*\otimes u^*\mapsto 0$ & $\quad$& $v^*\otimes v^*\mapsto\left(u+\frac{v}{5}\right)\otimes\frac{v^{\otimes 2}}{4!}$.
\end{tabular}
\end{center}
Since the vectors $\frac{u^{\ot 2}}{4!}\ot\left(\frac{u}{5}+v\right)$, $\frac{u}{2!}\ot\left(\frac{u}{3}+\frac{v}{3}\right)\ot\frac{v}{2!}$ and $\left(u+\frac{v}{5}\right)\ot\frac{v^{\ot 2}}{4!}$ are linearly independent, the flattening gives $\rk(\sigma^{(5)}(X_1\sqcup X_2))\ge \rk(M)= 3$. 
\end{example}

\begin{proposition}\label{propos: piecewise linear m=3}
Let $X_1,X_2,X_3$ be segments and let us call $u=\sigma^{(1)}(X_1)$, $v=\sigma^{(1)}(X_2)$ and $w=\sigma^{(1)}(X_3)$.
If $k=2s+1$ is odd, then 
\begin{align*}
    \sigma^{(k)}(X_1\sqcup X_2\sqcup X_3)=&\sum_{i=0}^s \frac{u^{\otimes 2i}}{(2i)!}\otimes \left( \frac{u}{2i+1} + v +\frac{w}{2(s-i)+1} \right)\otimes \frac{w^{\otimes 2(s-i)}}{(2(s-i))!}\\
    &+\sum_{i=0}^{s-1}\left( \sum_{j=1}^{2(s-i)-1} \frac{u^{\otimes j}}{j!}\otimes \frac{v^{\otimes 2(s-i)-j}}{(2(s-i)-j)!}\otimes \left(\frac{v}{2(s-i)-j+1}+ \frac{w}{2i+1} \right)\otimes \frac{w^{\otimes 2i}}{(2i)!}   \right)
    \\ &+\sum_{i=1}^s \frac{v^{\otimes 2i}}{(2i)!}\otimes \left( \frac{v}{2i+1}+ \frac{w}{2(s-i)+1} \right)\otimes \frac{w^{\ot 2(s-i)}}{(2(s-i))!}.
\end{align*}
If $k=2s$ is even, then 
\begin{align*}
    \sigma^{(k)}(X_1\sqcup X_2\sqcup X_3)=&\sum_{i=0}^{s-1} \frac{u^{\otimes 2i+1}}{(2i+1)!}\otimes \left( \frac{u}{2i+2} + v +\frac{w}{2(s-i-1)+1} \right)\otimes \frac{w^{\otimes 2(s-i-1)}}{(2(s-i-1))!}\\
&+\sum_{i=0}^{s-2}\left( \sum_{j=1}^{2(s-i)-2} \frac{u^{\otimes j}}{j!}\otimes \frac{v^{\otimes 2(s-i)-j-1}}{(2(s-i)-j-1)!}\otimes \left(\frac{v}{2(s-i)-j}+ \frac{w}{2i+1} \right)\otimes \frac{w^{\otimes 2i}}{(2i)!}   \right)
    \\ &+\sum_{i=0}^{s-1} \frac{v^{\otimes 2i+1}}{(2i+1)!}\otimes \left( \frac{v}{2i+2}+ \frac{w}{2(s-i)-1} \right)\otimes \frac{w^{2(s-i-1)}}{(2(s-i-1))!}+\frac{w^{\otimes k}}{k!}.
\end{align*}
In particular, $\rk \sigma^{(k)}(X_1\sqcup X_2\sqcup X_3)\le \ru{\frac{(k+1)^2}{4}}$.
\begin{proof}
As in \eqref{eq: weak naif bound on the rank of a pw linear path} we can write
\begin{equation}\label{eq:3pieces_all_k}
   \sigma^{(k)}(X_1\sqcup X_2\sqcup X_3)=\sum_{a_1+a_2+a_3=k} \frac{u^{\otimes \alpha_1}}{a_1!}\otimes \frac{v^{\otimes \alpha_2}}{a_2!}\otimes \frac{w^{\otimes \alpha_3}}{a_3!}
\end{equation}
as the sum of $\binom{k+2}{2}$ elementary tensors, corresponding to the non-commuting monomials of degree $k$ in which the variables $u$, $v$ and $w$ appear in this order. Assume that $k=2s+1$ is odd and call $A_1,A_2,A_3$ the three summands of the formula in our statement. The number of non-commuting monomials appearing in $A_1$ is $3(s+1)$, in $A_2$ we recognize $2s^2$ of these monomials and $2s$ more appear in $A_3$
. Observe that they are all distinct, and that the coefficient of each monomial equals the one in decomposition \eqref{eq:3pieces_all_k}. This means that all the $3(s+1)+2s^2+2s=\binom{2s+3}{2}$ monomials appearing in \eqref{eq:3pieces_all_k} also appear in $A_1+A_2+A_3$. Similarly, if $k=2s$ is even and we call $B_1,B_2,B_3$ the summands in the formula of our statement, we recognize $3s+2s(s-1)+2s$ elementary tensors and one more given by $w^{\otimes k}/k!$. This means that all the $3s+2s(s-1)+2s+1=\binom{2s+2}{2}$ monomials appearing in \eqref{eq:3pieces_all_k}   also appear in $B_1+B_2+B_3+w^{\otimes k}/k!$.

To conclude, notice that each summand of $A_1,A_2,A_3$ and $B_1,B_2,B_3$ is an elementary tensor, from which we deduce the bound on the rank.
\end{proof}
\end{proposition}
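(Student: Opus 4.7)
My plan is to start by writing $\sigma^{(k)}(X_1 \sqcup X_2 \sqcup X_3)$ as an explicit sum of rank-one tensors via Chen's identity (\Cref{them: concatenation theorem}), and then group these summands so that several of them fuse into a single elementary tensor by bilinearity of the tensor product in one factor, in the same spirit as the proof of \Cref{propos: m=2}. Applying \Cref{them: concatenation theorem} twice with \eqref{eq: signature of a segment} yields
\begin{equation*}
\sigma^{(k)}(X_1\sqcup X_2\sqcup X_3) = \sum_{a_1+a_2+a_3=k} \frac{u^{\otimes a_1}}{a_1!}\otimes \frac{v^{\otimes a_2}}{a_2!}\otimes \frac{w^{\otimes a_3}}{a_3!},
\end{equation*}
a sum of $\binom{k+2}{2}$ rank-one tensors indexed by monomials of degree $k$ in the non-commuting variables $u,v,w$ (in this order).

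The core observation is that $a \otimes x \otimes b + a \otimes y \otimes b + a \otimes z \otimes b = a \otimes (x+y+z) \otimes b$, so any three monomials that share identical left and right factors can be merged into one elementary tensor. The decomposition in the statement exploits this in a specific combinatorial pattern: the term $A_1$ (resp. $B_1$) fuses the triples of monomials in which the variable $v$ appears either zero or one times, producing the middle factor $\tfrac{u}{2i+1}+v+\tfrac{w}{2(s-i)+1}$; the double sum $A_2$ (resp. $B_2$) takes care of the monomials where $a_1 \geq 1$ and $a_2 \geq 2$ by pairing consecutive $v$-blocks with a neighbouring $w$-letter; and $A_3$ (resp. $B_3$) handles those with $a_1=0$ and $a_2\ge 2$. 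In the even case the monomial $w^{\otimes k}/k!$ has no natural partner and must be kept separately.

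The proof then reduces to verifying two bookkeeping assertions. First, one checks that, upon expanding each factor $\left(\tfrac{u}{\alpha}+v+\tfrac{w}{\beta}\right)$ or $\left(\tfrac{v}{\alpha}+\tfrac{w}{\beta}\right)$ by linearity, the monomials produced are pairwise distinct and carry exactly the coefficients $\tfrac{1}{a_1! a_2! a_3!}$ demanded by Chen's formula. Second, one counts: for $k=2s+1$ odd, $A_1$, $A_2$, $A_3$ contribute $3(s+1)$, $2s^2$ and $2s$ monomials respectively, totalling $\binom{2s+3}{2}$; for $k=2s$ even, $B_1$, $B_2$, $B_3$ and $w^{\otimes k}/k!$ contribute $3s$, $2s(s-1)$, $2s$ and $1$ monomials, summing to $\binom{2s+2}{2}$. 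Together these two checks prove that the claimed expression equals $\sigma^{(k)}(X_1\sqcup X_2\sqcup X_3)$, and counting the number of elementary tensors in the decomposition gives $(s+1)^2$ for $k=2s+1$ and $s^2+s+1$ for $k=2s$, both equal to $\lceil (k+1)^2/4 \rceil$.

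The main obstacle is purely combinatorial: correctly partitioning the $\binom{k+2}{2}$ monomials coming from Chen's identity into groups of two or three that can actually be fused into a single rank-one tensor, without overlap and without missing any monomial. The parity split into odd and even $k$ is forced because the decomposition is anchored on the structure of the $m=2$ decomposition applied to the outermost and innermost variables $u,w$; the middle $v$-block has then to be slotted in consistently, which produces the somewhat elaborate triple-indexed sums but no conceptual difficulty beyond the careful indexing.
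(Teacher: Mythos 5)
Your proposal is correct and follows essentially the same route as the paper's proof: expand via Chen's identity into $\binom{k+2}{2}$ rank-one summands, fuse monomials sharing left and right factors into the elementary tensors $A_i$ (resp. $B_i$), and verify by counting that all monomials are accounted for with the right coefficients. Your monomial counts and the final tallies $(s+1)^2$ and $s^2+s+1$ of elementary tensors match the paper exactly.
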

The next example shows that the bound from \Cref{propos: piecewise linear m=3} is sharp for $k=3$.
\begin{example}\label{example: for k=3 or m=3 our bound is sharp}
Consider the segments $X_1$, $X_2$ and $X_3$ corresponding to the vectors $e_1=(1,0,0)$, $e_2=(0,1,0)$ and $e_3=(0,0,1)$. This is called \emph{canonical axis path} in \cite[Example 2.1]{AFS19}. If we denote $e_{ijk}=e_i\ot e_j\ot e_k$, then $\sigma^{(3)}(X_1\sqcup X_2 \sqcup X_3)$ is
\[
\frac16e_{111}+\frac12e_{112}+\frac12e_{113}+\frac16e_{222}+\frac12e_{223}+\frac12e_{122}+e_{123}+\frac12e_{133}+\frac12e_{233}+\frac16e_{333}\in (\KK^3)^{\ot 3}.\]
We know that $\rk(\sigma^{(3)}(X_1\sqcup X_2 \sqcup X_3))\le 4$ and we want to prove that equality holds.
Since the only possible flattenings of $\sigma^{(3)}(X_1\sqcup X_2 \sqcup X_3)$ are $6\times 3$ matrices, they are not sufficient. For this reason we employ 
\emph{Koszul flattenings} (called \emph{Strassen equations} in \cite[Section 2.1]{LanOtt15}). While the usual flattening of a tensor $T\in U\otimes V\otimes W$ as an element of $U\otimes (V\otimes W)$ is the matrix corresponding to a linear map $T_U:U^*\to V\ot W$, the Koszul flattening is the composition 
$$
F: U^*\otimes W \xrightarrow{T_U\otimes \mathrm{id}_W} V\otimes W\otimes W \xrightarrow{\mathrm{skew}}V\otimes \wedge^2W,
$$
where $\mathrm{skew}(v\ot w_1\ot w_2)=v\ot (w_1\ot w_2- w_2\ot w_1)$. By \cite[Theorem 2.1]{LanOtt15} we get $\rk(T)\ge \frac{\rk(F)}{\dim(W)-1}$. So in order to prove that $\rk (\sigma^{(3)}(X_1\sqcup X_2 \sqcup X_3) )=4$, we just check directly that the rank of its Koszul flattening is 7.
\end{example}

Now that we have sorted out the case $m=2$ and $m=3$, we consider small values of $k$.

\begin{proposition}\label{propos: minimal decomposition for second and third signature}\label{propos: k=2}
If $X_1,\dots,X_m$ are segments, then  
$$
\sigma^{(2)}(X_1\sqcup \cdots \sqcup X_m)=\sum_{i=1}^m \sigma^{(1)}(X_i) \otimes \left( \frac{1}{2} \sigma^{(1)}(X_i)+\sigma^{(1)}(X_{i+1})+\cdots +\sigma^{(1)}(X_m)\right).
 $$   
In particular, $\rk(\sigma^{(2)}(X_1\sqcup \cdots \sqcup X_m))\le m$ and equality holds if $X_1,\dots,X_m$ are linearly independent.
\end{proposition}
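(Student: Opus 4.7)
The plan is to apply Chen's identity (Theorem~\ref{them: concatenation theorem}) together with the explicit formula \eqref{eq: signature of a segment} for the signature of a segment, then to verify the claimed identity by expanding the right-hand side. Concretely, set $v_i=\sigma^{(1)}(X_i)$. By iterating \Cref{them: concatenation theorem} exactly as in \eqref{eq: weak naif bound on the rank of a pw linear path}, the $k=2$ component is
\[
\sigma^{(2)}(X_1\sqcup\cdots\sqcup X_m)
=\sum_{a_1+\cdots+a_m=2}\sigma^{(a_1)}(X_1)\otimes\cdots\otimes\sigma^{(a_m)}(X_m).
\]
Since $\sigma^{(0)}(X_i)=1$, $\sigma^{(1)}(X_i)=v_i$ and $\sigma^{(2)}(X_i)=\tfrac12 v_i^{\otimes 2}$ by \eqref{eq: signature of a segment}, only two types of multi-indices contribute, giving
\[
\sigma^{(2)}(X_1\sqcup\cdots\sqcup X_m)=\sum_{i=1}^m\tfrac12\, v_i\otimes v_i+\sum_{1\le i<j\le m}v_i\otimes v_j.
\]

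The first step of the proof is then to expand the right-hand side of the claimed identity. Distributing the tensor product,
\[
\sum_{i=1}^m v_i\otimes\Bigl(\tfrac12 v_i+v_{i+1}+\cdots+v_m\Bigr)
=\sum_{i=1}^m\tfrac12\, v_i\otimes v_i+\sum_{i=1}^m\sum_{j>i}v_i\otimes v_j,
\]
which coincides with the expression above. This establishes the formula and, since it exhibits $\sigma^{(2)}(X_1\sqcup\cdots\sqcup X_m)$ as a sum of $m$ elementary tensors, it gives the upper bound $\rk(\sigma^{(2)}(X_1\sqcup\cdots\sqcup X_m))\le m$.

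For the lower bound, I would proceed via the $1$-flattening, exactly as in the proof of \Cref{propos: m=2}. Writing the decomposition as $\sum_{i=1}^m v_i\otimes w_i$ with $w_i=\tfrac12 v_i+v_{i+1}+\cdots+v_m$, the matrix of the flattening $V^*\to V$ has columns $v_1,\dots,v_m$ (linearly independent by hypothesis) and rows $w_1,\dots,w_m$. In the basis $v_1,\dots,v_m$, the matrix expressing the $w_i$'s is upper triangular with $\tfrac12$ on the diagonal, so its determinant is $2^{-m}\neq 0$. Hence the flattening has rank $m$, which forces $\rk(\sigma^{(2)}(X_1\sqcup\cdots\sqcup X_m))\ge m$.

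There is really no serious obstacle: everything reduces to bookkeeping of the double sum induced by Chen's identity and a triangularity argument for the lower bound. The only place where one must be slightly careful is in ordering the pairs $(i,j)$ correctly so that the diagonal and off-diagonal terms match; beyond that, both inequalities follow from a direct computation.
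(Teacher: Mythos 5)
Your proof is correct and follows essentially the same route as the paper: expand via Chen's identity, regroup the double sum into $m$ elementary tensors, and for the lower bound observe that (in the basis $v_1,\dots,v_m$, or equivalently after a $\GL(V)$ change of coordinates) the order-two tensor is an upper triangular matrix with nonzero diagonal, hence of rank $m$. No gaps.
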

\begin{proof}
\Cref{them: concatenation theorem} gives 
\begin{align*}
\sigma^{(2)}(X_1\sqcup \cdots \sqcup X_m)=&\sum_{i=1}^m\sigma^{(2)}(X_i)+\sum_{1\le k<\ell\le m}\sigma^{(1)}(X_i)\otimes \sigma^{(1)}(X_j)\\
=&\sum_{i=1}^m \frac{1}{2}\sigma^{(1)}(X_i)^{\otimes 2}+\sum_{1\le k<\ell\le m}\sigma^{(1)}(X_i)\otimes \sigma^{(1)}(X_j)\\
 =&\sum_{i=1}^m \sigma^{(1)}(X_i) \otimes \left( \frac{1}{2} \sigma^{(1)}(X_i)+\sigma^{(1)}(X_{i+1})+\cdots +\sigma^{(1)}(X_m)\right).
\end{align*}
If $X_1,\dots,X_m$ are independent, then by \Cref{remark: possiamo agire con GL} we can assume that they correspond to the first $m$ element of the standard basis of $V$. As highlighted also in \cite[Example 2.3]{AFS19},  $\sigma^{(2)}(X_1\sqcup \cdots \sqcup X_m)$ is an upper triangular matrix of size $d\times d$ with $m$ nonzero diagonal entries.
\end{proof}

Next we move to the case $k=3$. Instead of just bounding the rank of the third signature of a piecewise linear path, we produce a decomposition for a slightly larger class of order 3 tensors, that will be useful later.
\begin{definition}\label{definition: T_kaplha}
Let $k$ and $\alpha$ be integers such that $k\geq 2$ and $\alpha\geq 0$. If $v_1,\dots,v_m\in V$, then we define
$$S_{k,\alpha}(v_1,\dots,v_m)=\sum_{a_1+\cdots+a_m=k}  \frac{v_1^{\otimes a_1}\otimes v_2^{\otimes a_2}\otimes \cdots \otimes v_m^{\otimes a_m}}{(a_1+\alpha)!a_2!\cdots a_m! }\in V^{\ot k}.$$
\end{definition}
Notice that if $X_1,\dots,X_m$ are segments, then $S_{k,0}(\sigma^{(1)}(X_1),\dots,\sigma^{(1)}(X_m))=\sigma^{(k)}(X_1\sqcup\dots\sqcup X_m)$.

\begin{proposition}\label{proposition: k=3 rank bound}
   Let $v_1,\dots,v_m\in V$ and let $s=\lceil \frac{m}{2}\rceil$. Then
\begin{align*}
S_{3,\alpha}(v_1,\dots,v_m)  =&\; v_1^{\otimes 2}\otimes \left( \frac{v_1}{(3+\alpha)!}+\frac{v_2}{(2+\alpha)!}+\cdots + \frac{v_m}{(2+\alpha)!} \right) \\
&+\sum_{i=2}^{s}v_i^{\otimes 2}\otimes \left( \frac{v_i}{3!}+\frac{v_{i+1}}{2}+ \cdots +\frac{v_m}{2}\right)\\
&+\sum_{i=2}^s \left( \frac{v_1}{(\alpha+1)!}+v_2+\cdots + v_{i-1} \right)\otimes v_i\otimes  \left( \frac{v_i}{2}+v_{i+1}+\cdots  + v_m\right)\\
&+\sum_{i=s+1}^{m}\left( \frac{v_1}{(\alpha+1)!2}+\frac{v_2}{2}+\cdots +\frac{v_{i-1}}{2})+\frac{v_i}{3!} \right)\otimes v_i^{\otimes 2} \\
&+\sum_{i=s+1}^{m-1}\left( \frac{v_1}{(\alpha+1)!}+v_2+\cdots + v_{i-1}+\frac{v_i}{2} \right) \otimes v_i \otimes \left(  \sum_{j=i+1}^{m}  v_j \right).
\end{align*}
In particular,  $\rk( S_{3,\alpha}(v_1,\dots,v_m))\leq 2m-2$.
\end{proposition}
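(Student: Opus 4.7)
The strategy is to expand both sides as linear combinations of monomials $v_{i_1}\otimes v_{i_2}\otimes v_{i_3}$ and match coefficients, then read off the rank bound from the number of summands on the right-hand side. By \Cref{definition: T_kaplha},
\[
S_{3,\alpha}(v_1,\dots,v_m)=\sum_{a_1+\cdots+a_m=3}\frac{v_1^{\otimes a_1}\otimes\cdots\otimes v_m^{\otimes a_m}}{(a_1+\alpha)!\,a_2!\cdots a_m!}
\]
is a sum of $\binom{m+2}{3}$ monomials, and each weak composition of $3$ into $m$ parts has one of four shapes: a single entry equal to $3$; one entry $2$ followed by a later entry $1$; one entry $1$ followed by a later entry $2$; or three entries equal to $1$ at pairwise distinct positions.

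The five sums on the right-hand side partition these shapes neatly. Distributing the tensor products, the first sum produces precisely the monomials $v_1^{\otimes 2}\otimes v_j$ for $j\in\{1,\dots,m\}$, covering all weak compositions with $a_1\in\{2,3\}$. The second sum produces $v_i^{\otimes 2}\otimes v_j$ for $2\le i\le s$ and $j\ge i$, and the fourth produces $v_j\otimes v_i^{\otimes 2}$ for $s+1\le i\le m$ and $j\le i$; the split at $s=\lceil m/2\rceil$ ensures that the position of the doubled index runs through $\{1,\dots,m\}$ exactly once across these three sums. Monomials of shape $(1,1,1)$, namely $v_{i}\otimes v_{j}\otimes v_{k}$ with $i<j<k$, are collected by the third sum when $2\le j\le s$ and by the fifth when $s+1\le j\le m-1$; again the split at $s$ guarantees each triple appears exactly once. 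In addition, the $v_i/2$ term in the rightmost factor of the third sum absorbs the $(1,2)$-shapes with doubled index $\le s$, and the $v_i/2$ term in the leftmost factor of the fifth sum absorbs the $(2,1)$-shapes with doubled index $\ge s+1$.

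After this matching, verifying the coefficients is routine: the factor $(a_1+\alpha)!$ in the denominator on the left-hand side only affects monomials with $a_1>0$, which explains why $v_1$ is always dressed with $(\alpha+1)!$ or $(3+\alpha)!$ in the first, third, fourth and fifth sums, while the coefficients of $v_2,\dots,v_m$ come out of plain factorials $1!$, $2!$ or $3!$. Every summand on the right-hand side is then a tensor product of three specific vectors in $\langle v_1,\dots,v_m\rangle$ and hence is elementary. Counting the terms contributed by the five sums yields
\[
1+(s-1)+(s-1)+(m-s)+(m-s-1)=2m-2,
\]
which gives the stated bound on the rank.

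The main obstacle is the bookkeeping: one must ensure that no monomial is double-counted between the third and fifth sums (for the triple-index case) or between the first, second and fourth sums (for the doubled-index case), and that the various factorials and factors of $2$ and $6$ on the right match the denominators $(a_1+\alpha)!\,a_2!\cdots a_m!$ on the left. The choice $s=\lceil m/2\rceil$ is precisely what balances the two halves so that the total count is $2m-2$ regardless of the parity of $m$; sanity checks at $m=2$ (recovering the rank-$2$ decomposition already seen in \Cref{propos: m=2}) and $m=3$ (recovering the rank-$4$ bound of \Cref{propos: piecewise linear m=3}) give useful small-case confirmation that the indices are set correctly.
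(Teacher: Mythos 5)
Your strategy is exactly the paper's: expand each of the five summands of the right-hand side into elementary tensors, check that the resulting monomials are pairwise distinct, exhaust all $\binom{m+2}{3}$ weak compositions of $3$ into $m$ parts with matching coefficients, and read the bound $2m-2$ off the number of summands. Your bookkeeping by composition shape and by the position of the repeated index is a cleaner organization of the same verification that the paper carries out by counting the monomials occurring in each of $A_1,\dots,A_5$ and checking that the total is $\binom{m+2}{3}$.

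One concrete caveat: your claim that the factor $(a_1+\alpha)!$ ``only affects monomials with $a_1>0$'' is false for $\alpha\ge 2$, because a composition with $a_1=0$ still contributes $(0+\alpha)!=\alpha!$ to the denominator (compare \Cref{remark: T kalpha si spezza per fare induzione}, where the $a_1=0$ part of $S_{k,\alpha}$ is exactly $\frac{1}{\alpha!}S_{k,0}(v_2,\dots,v_m)$). As a result the coefficient matching fails on the $v_1$-free monomials when $\alpha\ge2$: for instance, with $m=2$ and $\alpha=2$ the displayed formula assigns $v_2^{\ot 3}$ the coefficient $\frac{1}{3!}$, whereas \Cref{definition: T_kaplha} gives $\frac{1}{2!\,3!}$. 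This is a defect of the stated identity itself rather than of your approach (the paper's proof asserts the coefficient match without checking this case either), and it is harmless for the conclusion: inserting the missing $\frac{1}{\alpha!}$ on the $v_1$-free terms only rescales vectors inside individual elementary tensors, so the count $2m-2$ and hence the rank bound are unchanged.
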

\begin{proof}
By \Cref{definition: T_kaplha}, the tensor $S_{3,\alpha}(v_1,\dots,v_m)$ is the sum of $\binom{3+m-1}{3} $ elementary tensors that correspond to degree three non-commuting monomials in which the variables $v_1,\dots,v_m$ appear in this order. We will prove the result by comparing the formula in our statement with the one defining $S_{3,\alpha}(v_1,\dots,v_m)$. Call $A_1,\dots,A_5$ the summands appearing on the right hand side of our statement. The number of elementary tensors appearing in $A_1$ is $m$, in $A_2$ we recognize $\frac{(2m-s)(s-1)}{2}$ elementary tensors, there are $ \frac16 s(s-1)(3m-2s+1)$ of them in $A_3$, $\frac12 (m-s)(m+s+1) $ 
 in $A_4$, and
 $\frac16 (m-s)(m^2+s(m-3)-2s^2-1) $  
of them in $A_5$. The monomials appearing in $A_1,\dots,A_5$ are all distinct and the coefficient of each monomial equals the one in the decomposition of $S_{3,\alpha}(v_1,\dots,v_m)$ given via  \Cref{definition: T_kaplha}. Hence, all 
 \begin{align*}
 &m+\frac{(2m-s)(s-1)}{2}+\frac16 s(s-1)(3m-2s+1)+\frac12 (m-s)(m+s+1)\\ &+\frac16 (m-s)(m^2+s(m-3)-2s^2-1)
 =\binom{3+m-1}{3}
 \end{align*}
monomials appearing in $A_1,\dots,A_5 $ also appear in $S_{3,\alpha}(v_1,\dots,v_m)$ and this concludes the proof. 
\end{proof}

\Cref{example: for k=3 or m=3 our bound is sharp} shows that the bound given by \Cref{proposition: k=3 rank bound} is sharp for $\alpha=0$ and $m=3$. Now we clarify why we used $S_{3,\alpha}(v_1,\dots,v_m)$ instead of $\sigma^{(3)}(X_1\sqcup\dots\sqcup X_m)$.

\begin{remark}\label{remark: T kalpha si spezza per fare induzione}
\Cref{definition: T_kaplha} allows us to argue by induction. Indeed,
\begin{align*}
  S_{k,\alpha}(v_1,\dots,v_m) &=   v_1\ot \sum_{\substack{ a_1+\cdots+a_m=k\\a_1\geq 1}} \frac{v_1^{\otimes a_1-1}\otimes \cdots \otimes v_m^{\otimes a_m}}{(a_1+\alpha)!a_2!\cdots a_m! }+ \frac{1}{\alpha!} \sum_{a_2+\cdots+a_m=k}  \frac{v_2^{\otimes a_1}\otimes \cdots \otimes v_m^{\otimes a_m}}{a_2!\cdots a_m! }\\
  &= v_1\ot \sum_{b_1+\cdots+b_m=k-1}  \frac{v_1^{\otimes b_1}\otimes \cdots \otimes v_m^{\otimes b_m}}{(b_1+1+\alpha)!b_2!\cdots b_m! }+ \frac{1}{\alpha!}S_{k,0}(v_2,\dots,v_m)\\
  &=v_1\ot S_{k-1,\alpha+1}(v_1,\dots,v_m) +\frac{1}{\alpha!}S_{k,0}(v_2,\dots,v_m).
\end{align*}
In particular, $\rk (S_{k,\alpha}(v_1,\dots,v_m))\leq \rk(S_{k-1,\alpha+1}(v_1,\dots,v_m))+\rk(S_{k,0}(v_2,\dots,v_m))$.
\end{remark}

Before we give our improved bound, we need a small result that will simplify its statement.
\begin{lemma}\label{lemma: somme di somme di somme}
If $k\ge 4$ and $m\geq 4$, then
$$
\sum_{a_1=4}^m \sum_{a_2=4}^{a_1} \cdots \sum_{a_{k-3}=4}^{a_{k-4}} (a_{k-3}-1)= 
\binom{m+k-6}{k-2}+2\binom{m+k-7}{k-3}.
$$
\end{lemma}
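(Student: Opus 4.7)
The plan is to prove the identity by induction on $n := k-3 \geq 1$. Setting
$$S_n(m) = \sum_{a_1=4}^m \sum_{a_2=4}^{a_1} \cdots \sum_{a_n=4}^{a_{n-1}} (a_n-1),$$
the claim is equivalent to $S_n(m) = \binom{m+n-3}{n+1} + 2\binom{m+n-4}{n}$ for all $n \geq 1$ and $m \geq 4$. The base case $n=1$ is immediate: $S_1(m) = \sum_{a=4}^{m}(a-1) = \tfrac{(m-3)(m+2)}{2}$, which agrees with $\binom{m-2}{2} + 2\binom{m-3}{1} = \tfrac{(m-2)(m-3)}{2} + 2(m-3)$ after a quick factorization of $m-3$.

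For the inductive step ($n \geq 2$), the key observation is that peeling off the outermost sum gives $S_n(m) = \sum_{a_1=4}^m S_{n-1}(a_1)$, since the inner $n-1$ nested sums are exactly the definition of $S_{n-1}(a_1)$. Applying the induction hypothesis then yields
$$S_n(m) = \sum_{a_1=4}^m \binom{a_1+n-4}{n} + 2\sum_{a_1=4}^{m}\binom{a_1+n-5}{n-1}.$$
Each summation is a textbook instance of the hockey-stick identity $\sum_{j=r}^{N}\binom{j}{r} = \binom{N+1}{r+1}$: after the changes of variable $j = a_1+n-4$ and $j = a_1+n-5$ respectively, the lower and upper bounds align with $j \in \{n,\dots,m+n-4\}$ and $j \in \{n-1,\dots,m+n-5\}$, giving $\binom{m+n-3}{n+1}$ and $\binom{m+n-4}{n}$. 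Resubstituting $n = k-3$ produces exactly $\binom{m+k-6}{k-2} + 2\binom{m+k-7}{k-3}$.

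I do not anticipate a serious obstacle: the argument reduces to two applications of the hockey-stick identity after a straightforward recursion in $n$. The only bookkeeping worth double-checking is that the lower summation bound $a_1 \geq 4$ shifts correctly to $j \geq n$ (respectively $j \geq n-1$), which matches the hockey-stick formula exactly. A more combinatorial alternative would be to set $b_i = a_i - 3$ and split $a_n - 1 = (a_n - 3) + 2$, interpreting the two resulting sums as a weighted count and an unweighted count of nonincreasing sequences in $\{1,\dots,m-3\}$; either route produces the same closed form.
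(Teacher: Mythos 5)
Your proof is correct and follows essentially the same route as the paper's: induction on $n=k-3$, the same base-case computation, peeling off the outer sum, and two applications of the hockey-stick identity after an index shift. No issues.
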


\begin{proof}
Let $s=k-3$. We prove that  
 $$
 \sum_{a_1=4}^m \sum_{a_2=4}^{a_1} \cdots \sum_{a_{s}=4}^{a_{s-1}} (a_{s}-1)= 
 \binom{m+s-3}{s+1}+2\binom{m+s-4}{s}
$$
 by induction on $s$.
For $s=1$ we have $\sum_{a_1=4}^m(a_1-1) =\frac{(m-3)(m+2)}{2!}=\binom{m-2}{2}+2\binom{m-3}{1}$. If $s\geq 2$, by induction hypothesis we have
\begin{align*}
\sum_{a_1=4}^m \left(\sum_{a_2=4}^{a_1} \cdots \sum_{a_{s}=4}^{a_{s-1}} (a_{s}-1)\right)
&=\sum_{a_1=4}^m\binom{a_1+s-4}{s}+2\sum_{a_1=4}^m\binom{a_1+s-5}{s-1}\\
&=\sum_{a=0}^{m-4}\binom{a+s}{s}+2\sum_{a=0}^{m-4}\binom{a+s-1}{s-1}\\
&=\sum_{a=0}^{m+s-4-s}\binom{a+s}{s}+2\sum_{a=0}^{m+s-5-(s-1)}\binom{a+s-1}{s-1}\\
&=\binom{m+s-4+1}{m+s-4-s}+2\binom{m+s-5+1}{m+s-5-(s-1)},
\end{align*}
where the last equality is the so-called hockey stick identity.
\end{proof}

\begin{theorem}\label{theorem: bound rango tutte signatures}
 Let $k,m,\alpha$ be integers such that $k\geq 3$, $m\geq 4$ and $\alpha \geq 0$. If $v_1,\dots,v_m\in V$, then $\rk(  S_{k,\alpha}(v_1,\dots,v_m))$ is at most
 $$
 \sum_{j=0}^{k-4} \binom{m+j-4}{j} \ru{\frac{(k-j+1)^2}{4}}
 +
 2\binom{m+k-6}{k-2}+4\binom{m+k-7}{k-3}.
 $$
\end{theorem}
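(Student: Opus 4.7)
The plan is to iterate the recursion from \Cref{remark: T kalpha si spezza per fare induzione} until we reach the two base cases supplied by \Cref{proposition: k=3 rank bound} and \Cref{propos: piecewise linear m=3}. Since all the bounds involved are independent of $\alpha$, I will abbreviate the induced upper bound on $\rk(S_{k,\alpha}(v_1,\ldots,v_m))$ by $R(k,m)$. The inequality in \Cref{remark: T kalpha si spezza per fare induzione} then reads
\[
R(k,m)\le R(k-1,m)+R(k,m-1)\quad\text{for }k\ge 4,\ m\ge 4,
\]
with base values $R(3,m)\le 2m-2$ (from \Cref{proposition: k=3 rank bound}) and $R(k,3)\le\left\lceil(k+1)^2/4\right\rceil$ (from \Cref{propos: piecewise linear m=3}). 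A small point to check is that whenever the unfolding reaches a leaf of the form $(k',3)$ the corresponding sub-tensor really has $\alpha=0$, as \Cref{propos: piecewise linear m=3} requires: this is automatic, because every down-step of the recursion resets $\alpha$ to $0$ and reaching $m=3$ requires a down-step.

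Next, I plan to unfold the recursion combinatorially, viewing it as a lattice walk from $(k,m)$ in which each step either decreases $k$ or decreases $m$ by one, and which stops upon first hitting $\{k=3\}\cup\{m=3\}$. Each leaf $(3,m')$ with $m'\in\{4,\ldots,m\}$ is reached by exactly $\binom{k-4+m-m'}{k-4}$ such walks, since the final step must be a left-step coming from the interior node $(4,m')$; analogously, each leaf $(k',3)$ with $k'\in\{4,\ldots,k\}$ is reached by $\binom{k-k'+m-4}{m-4}$ walks. Summing the base-case values weighted by these multiplicities gives
\[
R(k,m)\le\sum_{m'=4}^m\binom{k-4+m-m'}{k-4}(2m'-2)+\sum_{k'=4}^k\binom{k-k'+m-4}{m-4}\left\lceil\frac{(k'+1)^2}{4}\right\rceil.
\]

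To match the statement I would rearrange each sum separately. The change of variable $j=k-k'$ converts the second sum into $\sum_{j=0}^{k-4}\binom{m+j-4}{j}\lceil(k-j+1)^2/4\rceil$, which is precisely the first piece of the claimed bound. For the first sum, the key observation is that $\binom{k-4+m-m'}{k-4}$ equals the number of weakly increasing tuples $m'\le a_{k-4}\le\cdots\le a_1\le m$, so after relabeling the sum equals twice the nested sum appearing in \Cref{lemma: somme di somme di somme}; invoking that lemma produces the remaining two terms $2\binom{m+k-6}{k-2}+4\binom{m+k-7}{k-3}$. The main obstacle is precisely this identification with \Cref{lemma: somme di somme di somme}; once one realises that the lemma was tailored for this computation, everything else is bookkeeping with hockey-stick identities, and a direct verification handles the degenerate cases $k=3$ and $m=4$ where the unfolding is trivial.
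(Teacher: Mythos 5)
Your proposal is correct and follows essentially the same route as the paper: the same recursion from \Cref{remark: T kalpha si spezza per fare induzione}, the same two base cases (\Cref{proposition: k=3 rank bound} and \Cref{propos: piecewise linear m=3}), and the same use of \Cref{lemma: somme di somme di somme} for the final bookkeeping. The only difference is presentational: you unfold the recursion completely into a lattice-path count with explicit leaf multiplicities, whereas the paper packages the identical unfolding as an induction on $k$ that peels off all the $m$-steps at each level.
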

\begin{proof}
We argue by induction on $k$. Assume that $k=3$. Keeping in mind that any sum on the empty set gives 0, we have to show that $\rk(S_{3,\alpha}(v_1,\dots,v_m))\le 2m-2$, which we have done in \Cref{proposition: k=3 rank bound}. Now we assume that $k\ge 4$ and we repeatedly employ \Cref{remark: T kalpha si spezza per fare induzione} to bound
\begin{align*}
\rk&(S_{k,\alpha}(v_1,\dots,v_m))\le \rk(S_{k,0}(v_2\dots,v_m))+\rk(S_{k-1,\alpha+1}(v_1,\dots,v_m))\\
&\le \rk(S_{k,0}(v_3\dots,v_m))+\rk(S_{k-1,1}(v_2\dots,v_m))+\rk(S_{k-1,\alpha+1}(v_1,\dots,v_m))\\
& \hskip10pt\vdots \\
&\le \rk(S_{k,0}(v_{m-2},v_{m-1},v_m))+\left(\sum_{a_0=4}^{m-1}\rk(S_{k-1,1}(v_{m-a_0+1},\dots,v_m))\right)+\rk(S_{k-1,\alpha+1}(v_1,\dots,v_m)).
\end{align*}
By \Cref{propos: piecewise linear m=3}, we know that $\rk(S_{k,0}(v_{m-2},v_{m-1},v_m))\le \ru{\frac{(k+1)^2}{4}}$. Now we apply the induction hypothesis on $\rk(S_{k-1,1}(v_{m-a_0+1},\dots,v_m))$ and $\rk(S_{k-1,\alpha+1}(v_1,\dots,v_m))$ to deduce that $\rk(S_{k,\alpha}(v_1,\dots,v_m))$ is bounded from above by
\begin{align*}
\ru{\frac{(k+1)^2}{4}}
+\sum_{a_0=4}^{m}\left(
\sum_{j=0}^{k-5} \binom{a_0+j-4}{j} \ru{\frac{(k-j)^2}{4}}+2\binom{m+k-6}{k-2}+4\binom{m+k-7}{k-3}
\right).
\end{align*}
By \Cref{lemma: somme di somme di somme}, we rewrite the sum on the right as
\begin{align*}
\sum_{a_0=4}^{m}&\left(
\sum_{j=0}^{k-5} \binom{a_0+j-4}{j} \ru{\frac{(k-j)^2}{4}}+2\sum_{a_1=4}^{a_0}\dots\sum_{a_{k-4}=4}^{a_{k-5}}(a_{k-4}-1)
\right)\\
&=
\sum_{j=0}^{k-5}\sum_{a_0=4}^{m} \binom{a_0+j-4}{j} \ru{\frac{(k-j)^2}{4}}+2\sum_{a_0=4}^{m}\sum_{a_1=4}^{a_0}\dots\sum_{a_{k-4}=4}^{a_{k-5}}(a_{k-4}-1)\\
&=
\sum_{j=0}^{k-5}\binom{m+j-3}{j+1} \ru{\frac{(k-j)^2}{4}}+2\sum_{a_0=4}^{m}\sum_{a_1=4}^{a_0}\dots\sum_{a_{k-4}=4}^{a_{k-5}}(a_{k-4}-1)\\
&=
\sum_{l=1}^{k-4}\binom{m+l-4}{l} \ru{\frac{(k-l+1)^2}{4}}+2\sum_{a_0=4}^{m}\sum_{a_1=4}^{a_0}\dots\sum_{a_{k-4}=4}^{a_{k-5}}(a_{k-4}-1).
\end{align*}
By applying \Cref{lemma: somme di somme di somme} again, we obtain that $\rk(S_{k,\alpha}(v_1,\dots,v_m))$ is at most
\begin{align*}
\ru{\frac{(k+1)^2}{4}}+
\sum_{l=1}^{k-4}\binom{m+l-4}{l} \ru{\frac{(k-l+1)^2}{4}}
+2\binom{m+k-6}{k-2}+4\binom{m+k-7}{k-3}.
\end{align*}
We conclude by noticing that $\ru{\frac{(k+1)^2}{4}}=\binom{m+0-4}{0}\ru{\frac{(k+1)^2}{4}}$.
\end{proof}

\begin{remark} For $\alpha=0$, \Cref{theorem: bound rango tutte signatures} gives an upper bound on the rank of the $k$-th signature of a piecewise linear path with $m$ steps. More than that, 
if we combine it with \Cref{remark: T kalpha si spezza per fare induzione} we also get an explicit decomposition with that length. Finally we observe that, for a fixed $k$, the bound from \Cref{theorem: bound rango tutte signatures} is polynomial in $m$ and its leading term is $\frac{2m^{k-2}}{(k-2)!}$.
\end{remark}

\section{Pure volume paths}
\label{section: pure_volume}
In the last decades, the theory of signature advanced way beyond the study of  
bounded variation paths to include the far more general class of \emph{rough paths}. A precise definition of rough paths exceeds the scope of this paper. In this section, we deal with a simple prototype of rough path, the so-called \emph{pure volume path}. A first example can be found in \cite[Exercise 2.17]{frizhairer}: it is constructed as the limit of smaller and smaller loops. The authors call it a \emph{pure area path} because it has properties that a piecewise smooth path cannot have: as described in \cite[Remark 27]{roughveronese}, such a signature suggests a constant path that nevertheless encompass a nonzero area.
\begin{definition}\label{definition: pure volume path}
Let $n\in\N$. A log-signature $\ell=(T_{(i)}\mid i\in \N)\in\Lie((V))$ is a \emph{pure $n$-volume path} if $T_{(i)}=0$ for every $i\neq n$.
\end{definition}

For $n=1$, \Cref{thm: caratterizzazione segmenti AGRSS} implies that pure length paths correspond to segments. 
Moreover, one could argue that just as segments are the most basic example of a regular path, pure volume paths are a basic example of rough paths. For these reasons we find it interesting to extend part of the statement of \Cref{thm: caratterizzazione segmenti AGRSS} to $n\ge 2$. Namely we prove that pure volume paths are characterized by the asymptotic behaviour of their signatures, and we give an upper bound on their rank.

\begin{theorem}\label{theorem: criterion}\label{thm: pure volume}
Let $\ell=(T_{(i)}\mid i\in \N)\in\Lie((V))$ be a log-signature
. The following are equivalent:
    \begin{enumerate}
\item\label{item: 1_criterion} $\ell$ is a pure $n$-volume path
.
\item\label{item: 2_criterion}  for every $k\ge 1$ we have $\varphi_k(\ell)=\begin{cases}
            \frac{1}{h!}T_{(n)}^{\ot h} & \hbox{if there exists $h\in\N$ such that } k=hn\\
            0 & \hbox{otherwise}.
\end{cases}$
\item\label{item: 3_criterion}  there exists $k_0>n$ such that $\varphi_k(\ell)=\begin{cases}
            \frac{1}{h!}T_{(n)}^{\ot h} & \hbox{if there exists $h\in\N$ such that } k=hn\\
            0 & \hbox{otherwise}
        \end{cases}$ \\ for every $k\ge k_0$.
    \end{enumerate}
    If any of these equivalent statements holds, then $\rk(\varphi_k(\ell)) \leq     \rk (T_{(n)})^{\frac{k}{n}}$ whenever $k$ is a multiple of $n$.
\end{theorem}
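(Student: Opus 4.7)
The plan is to establish the cyclic chain $(1)\Rightarrow(2)\Rightarrow(3)\Rightarrow(1)$ and then prove the rank bound separately.

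The implication $(1)\Rightarrow (2)$ is a direct inspection of \Cref{definition: phi_k}: the assumption $T_{(i)}=0$ for $i\neq n$ kills every tuple $(\alpha_1,\dots,\alpha_t)$ summing to $k$ except the one with all entries equal to $n$. Such a tuple exists if and only if $n\mid k$, in which case it contributes $\frac{1}{h!}T_{(n)}^{\otimes h}$ with $h=k/n$. The implication $(2)\Rightarrow (3)$ is immediate (take any $k_0>n$).

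The heart of the theorem is $(3)\Rightarrow (1)$, which I would prove by contradiction using the Thrall decomposition $V^{\otimes k}=\bigoplus_{\lambda\vdash k}W_\lambda(V)$. Assume some $T_{(i_0)}\neq 0$ with $i_0\neq n$, and pick $M\in\N$ large enough that $Mi_0\geq k_0$. Set $k:=Mi_0$ and let $\mu=(i_0,\dots,i_0)$ be the partition of $k$ with $M$ equal parts; since $i_0\neq n$, $\mu\neq(n,\dots,n)$. By hypothesis, $\varphi_k(\ell)$ is either $0$ (when $n\nmid k$) or a scalar multiple of $T_{(n)}^{\otimes k/n}$ (when $n\mid k$), so in both cases it lies entirely in the single Thrall summand $W_{(n,\dots,n)}(V)$. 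Projecting to the distinct summand $W_\mu(V)$ therefore gives $f_\mu(\ell)=0$. On the other hand, because all entries of $\mu$ are equal, $P(\mu)$ is a singleton and the formula in \Cref{definition: Thrall modules} reduces to
$$f_\mu(\ell)=\frac{1}{M!}\,T_{(i_0)}^{\otimes M},$$
which is nonzero since $T_{(i_0)}\neq 0$ (tensor powers of nonzero tensors remain nonzero over $\R$ or $\C$). This contradicts $f_\mu(\ell)=0$.

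For the rank bound, when $k=hn$ item (2) yields $\varphi_k(\ell)=\frac{1}{h!}T_{(n)}^{\otimes h}$. Starting from a minimal rank decomposition of $T_{(n)}$ and tensoring together $h$ copies produces a decomposition of $T_{(n)}^{\otimes h}$ with $\rk(T_{(n)})^h$ elementary summands, and rescaling by the nonzero scalar $\frac{1}{h!}$ does not change the rank. Hence $\rk(\varphi_k(\ell))\leq \rk(T_{(n)})^{k/n}$. The main obstacle is $(3)\Rightarrow (1)$; its resolution hinges on the convenient fact that for the \emph{pure} partition $\mu=(i_0,\dots,i_0)$ the set $P(\mu)$ is a singleton, so $f_\mu(\ell)$ reduces to a single, easily identified tensor that Thrall orthogonality forces to vanish.
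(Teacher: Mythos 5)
Your proposal is correct and follows essentially the same route as the paper: the easy implications are handled by inspection of $\varphi_k$, and $(3)\Rightarrow(1)$ is settled by taking a multiple $k=ai_0\geq k_0$ of $i_0$ and using the directness of the Thrall decomposition to force $f_{(i_0,\dots,i_0)}(\ell)=\frac{1}{a!}T_{(i_0)}^{\otimes a}$ to vanish, hence $T_{(i_0)}=0$. The only cosmetic difference is that you phrase this step as a contradiction while the paper argues directly, and you spell out the tensoring of minimal decompositions for the rank bound, which the paper leaves implicit.
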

\begin{proof}
It is immediate that \cref{item: 1_criterion} implies \cref{item: 2_criterion}, which in turn implies \cref{item: 3_criterion}. Now we assume that \cref{item: 3_criterion} holds and we prove that $T_{(i)}=0$ for every $i\neq n$. Let $i\neq n$ and take $k=ai\geq k_0$ a multiple of $i$. By hypothesis all summands of $\f_k(\ell)$ are zero, except possibly the one associated to the partition $(n,n,\dots,n)$ of $k$. Since $i\neq n$ and the Thrall modules are in direct sum, the summand of $\f_k(\ell)$ associated to the partition $(i,i,\dots,i)$ vanishes. This means that $T_{(i)}^{\ot a}=0$ and so $T_{(i)}=0$. 

If any of these statements holds, then \cref{item: 2_criterion} implies the required bound on the rank.
\end{proof}

\begin{remark}\label{rmk: the ranks of the signatures of a pure volume path can jump a lot up and down}
Pure volume paths may be useful to produce examples of signatures with specific ranks. For instance, if $T_{(2)}\in\Lie^2( V)$ is a skew-symmetric matrix of rank $r$ and $\ell=(0,T_{(2)},0,0,\dots)\in\Lie(( V))$ is a pure area path, then the sequence of ranks of signatures of $\ell$ is $(0,r,0,r^2,0,...)$. This shows that the ranks can jump up and down, and the jumps can be arbitrarily big.
\end{remark}

\section{Symmetries of signature tensors}\label{section: symmetries}

Looking at \Cref{thm: caratterizzazione segmenti AGRSS}, it is natural to ask what happens if we replace the hypothesis that the signatures are symmetric with a different constraint, such as skew-symmetry or partial symmetry. \Cref{prop: result_skew_signature} and \Cref{coroll: if k ge 4 ogni parzialmente simmetrico è simmetrico.} illustrate how most signature tensors do not admit such symmetries.

We start by considering skew-symmetric signature tensors. Our strategy boils down to 
determining which Thrall module contains the irreducible Schur module $\wedge^kV$.

\begin{lemma}\label{prop: exterior_in_trall}
If $k$ is even, then $ \wedge^k (V)\subseteq W_{(2,\dots,2)}(V)$. If $k$ is odd, then $ \wedge^k (V)\subseteq W_{(2,\dots,2,1)}(V)$.
\end{lemma}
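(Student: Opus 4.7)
The plan is to unwind the definition of $W_\lambda(V)$ and then invoke a classical plethysm decomposition. First I would observe that the free Lie algebra satisfies $\Lie^1(V)=V$ and $\Lie^2(V)=\wedge^2 V$, since the latter is spanned by commutators $[v,w]=v\otimes w-w\otimes v$. Plugging into \Cref{definition: Thrall modules}, if $k=2m$ then $W_{(2,\ldots,2)}(V)=\Sym^m(\wedge^2 V)$, whereas if $k=2m+1$ then $W_{(2,\ldots,2,1)}(V)=V\otimes\Sym^m(\wedge^2 V)$. So the lemma reduces to two containments of $\GL(V)$-subrepresentations of $V^{\otimes k}$: namely $\wedge^{2m}V\subseteq\Sym^m(\wedge^2 V)$ in the even case, and $\wedge^{2m+1}V\subseteq V\otimes\Sym^m(\wedge^2 V)$ in the odd case.

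For the even case, I would invoke the classical plethysm identity (equivalent to the Littlewood formula, or to $(\GL,\GL)$-Howe duality on $\Sym^{\bullet}(\wedge^2(V\oplus W))$): as $\GL(V)$-representations,
\[\Sym^m\bigl(\wedge^2 V\bigr)=\bigoplus_{\lambda}\s_\lambda V,\]
where $\lambda$ ranges over the partitions of $2m$ whose columns all have even length. The partition $(1^{2m})$ has a single column of length $2m$, which is even, so $\s_{(1^{2m})}V=\wedge^{2m}V$ appears as a direct summand, giving the required inclusion.

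For the odd case, I would use the even case together with Pieri's rule. Since $\wedge^{2m}V\subseteq\Sym^m(\wedge^2 V)$, tensoring on the left with $V$ yields $V\otimes\wedge^{2m}V\subseteq V\otimes\Sym^m(\wedge^2 V)$. Pieri then decomposes the left side as $V\otimes\wedge^{2m}V\cong\wedge^{2m+1}V\oplus\s_{(2,1^{2m-1})}V$, so $\wedge^{2m+1}V$ embeds into $W_{(2,\ldots,2,1)}(V)$ as desired.

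I do not expect a serious obstacle here. The only delicate step is citing the plethysm decomposition correctly; I would give a textbook reference such as Macdonald's book or Fulton--Harris rather than reproving it, and note that since the identity is characteristic-zero it is valid over both $\R$ and $\C$, as the paper requires. An elementary alternative would be to construct the embedding $\wedge^{2m}V\hookrightarrow\Sym^m(\wedge^2 V)$ explicitly by antisymmetrizing a product of commutators, but the representation-theoretic shortcut is much cleaner.
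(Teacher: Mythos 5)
Your proof is correct, but it takes a genuinely different route from the paper's. The paper argues by hand: it characterizes $W_{(2,\dots,2)}(V)=\Sym^s(\wedge^2 V)$ inside $V^{\ot 2s}$ by its defining symmetry conditions (antisymmetry in each consecutive pair of slots $(j,j+1)$ with $j$ odd, and symmetry under swapping two such pairs) and checks directly that every fully skew-symmetric tensor satisfies them, the block swap being an even permutation; the odd case then follows from $\wedge^{2s+1}(V)\subset\wedge^{2s}(V)\ot V$ exactly as in your Pieri step. You instead invoke Littlewood's plethysm formula $\Sym^m(\wedge^2 V)\cong\bigoplus_{\lambda}\s_\lambda V$ over partitions of $2m$ with all columns of even length, observe that $(1^{2m})$ qualifies, and finish the odd case with Pieri. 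Both are valid in characteristic zero, hence over $\R$ and $\C$. Your argument is shorter and yields more (the full list of Schur constituents of $W_{(2,\dots,2)}(V)$, in the spirit of the paper's later Proposition on $\s_{(k-1,1)}$), at the cost of citing plethysm machinery; the paper's is elementary and self-contained. One small point you should make explicit: the plethysm identity is an isomorphism of abstract representations, whereas the lemma asserts a containment of concrete subspaces of $V^{\ot k}$. The bridge is that $\wedge^k V$ occurs with multiplicity one in $V^{\ot k}$, so this irreducible subspace must lie entirely inside a single summand of the Thrall decomposition, and your computation identifies which one; one sentence suffices, but without it the deduction "appears as a direct summand, giving the required inclusion" is incomplete.
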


\begin{proof}
Let $k=2s$ for some positive integer $s$. The exterior algebra $\wedge^{2s}(V)$ is 
$$
\wedge^{2s}(V)=\{ T \in V^{\otimes 2s} \mid T_{i_1,\dots,i_{2s}}=\mathrm{sgn}(\rho)T_{i_{\rho(1)},\dots,i_{\rho(2s)}} \mbox{ for every } \rho \in \mathfrak{S}_{2s} \},
$$
while \Cref{definition: Thrall modules} tells us that $W_{(2,\dots,2)}(V)=\Sym^s(\Lie^2(V))=\Sym^s(\wedge^2V)$. Let $T\in\wedge^{2s}(V)$. In order to prove that $T\in\Sym^s(\wedge^2(V))$ we have to prove that 
\begin{itemize}
\item $T_{i_1,\dots,i_{2s}}=-T_{i_{\rho(1)},\dots,i_{\rho(2s)}}$ whenever $\rho$ is the transposition $(j,j+1)$ for some odd index $j$, and
\item 
$T_{i_1,\dots,i_{2s}}=T_{i_{\rho(1)},\dots,i_{\rho(2s)}}$ whenever $\rho$ is the permutation sending $(j,j+1,l,l+1)$ to $(l,l+1,j,j+1)$ for some odd indices $j$ and $l$.
\end{itemize}
The first property comes directly from the skew-symmetry of $T$, and the second one as well, because 
that permutation has a positive sign
. 
For $k=2s+1$, 
we apply what we have just proven in the even case to see that
\[
\wedge^{2s+1}(V)\subset \wedge^{2s}(V)\ot V\subset \Sym^{s}(\wedge^2(V))\otimes V=W_{(2,\dots,2,1)}(V). \qedhere
\]
\end{proof}
As simple as it may look, \Cref{prop: exterior_in_trall} is enough to prove that signature tensors are almost never skew-symmetric. Before stating the theorem, we need a little technical lemma.

\begin{lemma}\label{lemma: power_skew_matrix_not_skew}
If $A $ is a nonzero skew-symmetric matrix, then $A^{\otimes s}$ is not skew symmetric.
\end{lemma}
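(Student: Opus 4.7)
The plan is to assume, for contradiction, that $A^{\otimes s}$ is skew-symmetric (for $s\ge 2$, which is the only nontrivial case needed) and then compare two entries that differ by a single transposition. Since $A\neq 0$ is skew-symmetric, there exist indices $p\neq q$ with $A_{pq}\neq 0$; skew-symmetry of $A$ automatically forces $A_{pp}=A_{qq}=0$. The key observation is that $A^{\otimes s}$ factors cleanly as
\[
(A^{\otimes s})_{i_1 i_2 i_3 i_4\cdots i_{2s-1}i_{2s}}=A_{i_1 i_2}A_{i_3 i_4}\cdots A_{i_{2s-1}i_{2s}},
\]
so a transposition acting on a pair of indices sitting inside a single factor will interact very differently from one acting across two factors.

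I would apply the transposition that swaps positions $2$ and $3$. If $A^{\otimes s}$ were skew-symmetric, then for every choice of indices we would need
\[
A_{i_1 i_2}A_{i_3 i_4}A_{i_5 i_6}\cdots A_{i_{2s-1}i_{2s}}=-A_{i_1 i_3}A_{i_2 i_4}A_{i_5 i_6}\cdots A_{i_{2s-1}i_{2s}}.
\]
I would now set $i_1=i_2=p$, $i_3=i_4=q$, and fill the remaining $2(s-2)$ positions with alternating $p$ and $q$ so that every factor $A_{i_{2k-1}i_{2k}}$ for $k\ge 3$ equals $A_{pq}$. The left-hand side becomes $A_{pp}A_{qq}A_{pq}^{\,s-2}=0$, whereas the right-hand side becomes $-A_{pq}\cdot A_{pq}\cdot A_{pq}^{\,s-2}=-A_{pq}^{\,s}\neq 0$, a contradiction.

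This gives the claim at once; there is essentially no obstacle beyond selecting the correct multi-index. The only thing to check carefully is that enough positions are available to make the construction go through, which requires $s\ge 2$, consistent with how the lemma is used (for $s=1$ the assertion would be vacuously false, but \Cref{lemma: power_skew_matrix_not_skew} will be invoked only for higher orders in the proof of \Cref{prop: result_skew_signature}).
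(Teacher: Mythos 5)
Your proof is correct and is essentially the paper's own argument: both pick a nonzero off-diagonal entry $A_{pq}$, compare the entry of $A^{\otimes s}$ at the multi-index $(p,q,p,q,\dots)$ with the one obtained by transposing positions $2$ and $3$, and derive the contradiction $0\neq A_{pq}^s=\pm A_{pp}A_{qq}A_{pq}^{s-2}=0$. Your explicit remark that the argument needs $s\ge 2$ (consistent with how the lemma is invoked, for even $k=2s\ge 4$) is a small but accurate addition.
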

\begin{proof}
Let $T=A^{\otimes s}$, hence $T_{i_1,j_1,\dots,i_s,j_s}=a_{i_1,j_1}\cdots a_{i_s,j_s}$, and assume by contradiction that $T$ is skew-symmetric. Since $A\neq 0$, at least one of its entries is nonzero, say $a_{i,j}\neq 0$. Hence
$$
0\neq a_{i,j}\cdot a_{i,j}\cdots a_{i,j}=T_{i,j,i,j,i,j\dots,i,j}=T_{i,i,j,j,i,j,\dots,i,j}=a_{i,i}\cdot a_{j,j}\cdots a_{i,j}=0,
$$
which is impossible.
\end{proof}

\begin{theorem}\label{prop: result_skew_signature}
If $k\ge 3$, then there are no nonzero skew-symmetric signature tensors of order $k$.
\end{theorem}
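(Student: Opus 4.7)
The plan is to suppose that $\varphi_k(\ell)$ is a nonzero skew-symmetric signature tensor of order $k\ge 3$ and derive a contradiction by combining \Cref{prop: exterior_in_trall}, the direct sum decomposition into Thrall modules, and \Cref{lemma: power_skew_matrix_not_skew}. The starting observation is that skew-symmetry of $\varphi_k(\ell)$ means $\varphi_k(\ell)\in\wedge^k V$, so by \Cref{prop: exterior_in_trall} the tensor lives entirely in $W_{(2,\dots,2)}(V)$ when $k$ is even and in $W_{(2,\dots,2,1)}(V)$ when $k$ is odd. Since $V^{\ot k}=\bigoplus_{\lambda\vdash k} W_\lambda(V)$ is a direct sum and $\varphi_k(\ell)=\sum_{\lambda\vdash k} f_\lambda(\ell)$ with each summand $f_\lambda(\ell)\in W_\lambda(V)$, the components $f_\mu(\ell)$ corresponding to partitions $\mu$ different from the chosen one must all vanish.

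For $k=2s$ with $s\ge 2$, the only surviving component is $f_{(2,\dots,2)}(\ell)$, and since the partition $(2,\dots,2)$ admits a unique permutation, this reduces to $\varphi_k(\ell)=\frac{1}{s!}\,T_{(2)}^{\ot s}$. Recalling that $\Lie^2(V)=\wedge^2 V$, the matrix $T_{(2)}$ is skew-symmetric. If $T_{(2)}\neq 0$, then \Cref{lemma: power_skew_matrix_not_skew} tells us $T_{(2)}^{\ot s}$ is not skew-symmetric, contradicting the assumed skew-symmetry of $\varphi_k(\ell)$. Hence $T_{(2)}=0$, which forces $\varphi_k(\ell)=0$.

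For $k=2s+1$ with $s\ge 1$, the direct sum argument yields $f_\mu(\ell)=0$ for every $\mu\ne(2,\dots,2,1)$. Applying this to the partition $\mu=(1,\dots,1)$ gives $\frac{1}{k!}T_{(1)}^{\ot k}=0$, hence $T_{(1)}=0$. The tensor $\varphi_k(\ell)$ then equals $f_{(2,\dots,2,1)}(\ell)$, which is a sum over the distinct permutations of $(2,\dots,2,1)$ of tensor products containing exactly one factor $T_{(1)}$; every such summand vanishes, so $\varphi_k(\ell)=0$. The main conceptual obstacle, once \Cref{prop: exterior_in_trall} is in hand, is just ensuring that in the odd case the desired vanishing can be extracted from a single Thrall component; this is what makes extracting $T_{(1)}=0$ via the $(1,\dots,1)$ partition the crucial move, and the parallel treatment via \Cref{lemma: power_skew_matrix_not_skew} in the even case is what forces the restriction $k\ge 3$ (the lemma being vacuous for $s=1$, which corresponds precisely to the excluded case $k=2$).
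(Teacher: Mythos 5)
Your proof is correct and follows essentially the same route as the paper: locate $\wedge^k V$ inside a single Thrall module, use the direct sum to kill the other components (extracting $T_{(1)}=0$ from the partition $(1,\dots,1)$ in the odd case), and invoke \Cref{lemma: power_skew_matrix_not_skew} in the even case. Your closing remark correctly identifies why $k=2$ must be excluded.
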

\begin{proof}
Let $\ell=(T_{(i)}\mid i\in\N)$ be a log signature. Recall from \Cref{definition: phi_k} that 
\begin{equation}\label{eq: phi_k}
\f_k(\ell)=\sum_{\lambda\vdash k}f_\lambda(\ell),
\end{equation}
where $f_\lambda(\ell)\in W_\lambda(V)$. By \cite[Theorem 3.2]{AGRSS}, the right hand side of \eqref{eq: phi_k}  is a direct sum. 
Assume that $\varphi_k(\ell)\in \wedge^kV$. By \Cref{prop: exterior_in_trall}, only one summand of \eqref{eq: phi_k} is nonzero. If $k$ is odd, we have $\f_k(\ell)=f_{(2,\dots,2,1)}(\ell)$. In particular $f_{(1,\dots,1)}=0$, hence $T_{(1)}^{\otimes k}$ and so $T_{(1)}=0$.  As a consequence, $\f_k(\ell)=f_{(2,\dots,2,1)}(\ell)=0$. Now assume that $k=2s$ is even. Then $\f_k(\ell)=f_{(2,\dots,2)}(\ell)=\frac{1}{s!}T_{(2)}^{\ot s}$. By assumption $\f_k(\ell)$ is skew-symmetric, so \Cref{lemma: power_skew_matrix_not_skew} implies that $\f_k(\ell)$ is actually zero. 
\end{proof}

Comparing \Cref{prop: result_skew_signature} and \Cref{thm: caratterizzazione segmenti AGRSS}, we can say that for a non-zero log-signature $\ell\in\Lie((V))$ it is impossible to have $\f_k(\ell)\in\wedge^k(V)$ for every $k$, or even for every $k$ sufficiently large.

Now we move to a different kind of symmetry. A tensor is \emph{partially symmetric} if it is invariant under permuting a certain subset of its $k$ indices. For the sake of simplicity, in this paper we only deal with tensors that are invariant either under permutation the first $k-1$ indices, or under permutation the last $k-1$ indices. In other words, we consider signature tensors that belong to
\[
\Sym^{k-1}(V)\ot V \mbox{ or } V \ot\Sym^{k-1}(V).
\]
While the arguments we present to deal with these two cases are the same, it is important that the commuting indices are all but the first one or all but the last one. Indeed, if we consider a different subset of $k-1$ indices, our results are not true anymore, as we will see in \Cref{example: k=3 d=2 partially}. The reason is that permuting the factors of $V^{\ot k}$ does not preserve the property of being a signature. For instance, 
the tensor
\[\frac16 e_1\ot e_1\ot e_1+\frac12 e_1\ot e_1\ot e_2+\frac12 e_2\ot e_1\ot e_2+\frac16 e_2\ot e_2\ot e_2
\]
satisfies the equations of \cite[Example 4.13]{AFS19}, but after swapping the first two factors it no longer does.

Just as in the case of skew-symmetric tensors, our first step is to check what are the Thrall modules containing the space of partially symmetric tensors. The difference is that, unlike $\wedge^k(V)$, the $\GL(V)$-representations 
$\Sym^{k-1}(V)\ot V \mbox{ and } V \ot\Sym^{k-1}(V)$ are not irreducible. We start by recalling some facts from representation theory, that are true over the complex field $\C$. Nevertheless we will prove our result, \Cref{coroll: if k ge 4 ogni parzialmente simmetrico è simmetrico.}, for both real and complex tensors.

\begin{remark}\label{rmk: schur decomposition of partially symmetric tensors} The space of partially symmetric tensors is $\Sym^{k-1}(\C^d)\ot \C^d \cong \Sym^{k}(\C^d)\oplus\s_{(k-1,1)}(\C^d)$ as a direct sum of irreducible $\GL(\C^d)$-representations. This is a special case of the Little-Richardson rule, illustrated for instance in \cite[Exercise 6.10]{FHrep}. 
\end{remark}

Before we state the next result, we introduce the notation $
(\lambda_1^{a_1},\dots,\lambda_t^{a_t})$ to denote the partition of $a_1\lambda_1+\dots+a_t\lambda_t$ where each number  $\lambda_i$ appears $a_i$ times.

\begin{proposition}\label{proposition: schur(k-1 2) va solo in W(t 1^k-t)}
Let $k\geq 3$. Then $W_\lambda(\C^d)$ contains a copy of the Schur module $\s_{(k-1,1)}(\C^d)$ as an irreducible subrepresentation if and only if there exists $r\in\{2,\dots,k\}$ such that $\lambda=(r,1^{k-r})$.
\end{proposition}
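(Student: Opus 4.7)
The plan is to combine Schur--Weyl duality with a simple highest-weight construction, so that the statement reduces to matching two integers. Let $c_{\lambda,\mu}$ denote the multiplicity of $\s_\mu(\C^d)$ in $W_\lambda(\C^d)$.

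\emph{Sufficiency.} Fix $r\in\{2,\dots,k\}$ and set $\lambda=(r,1^{k-r})$, so that $a_1(\lambda)=k-r$, $a_r(\lambda)=1$ and
\[W_\lambda(\C^d)=\Sym^{k-r}(\C^d)\otimes \Lie^r(\C^d).\]
The iterated left-normed bracket $\mathrm{ad}_{e_1}^{r-1}(e_2)=[e_1,[e_1,\dots,[e_1,e_2]\cdots]]\in\Lie^r(\C^d)$ is a nonzero Lie word of weight $(r-1,1,0,\dots,0)$. Since $\GL(\C^d)$ acts by derivations on brackets, the raising operator $E_{12}$ turns it into $\mathrm{ad}_{e_1}^{r-1}(e_1)=0$, and every other $E_{ij}$ with $i<j$ and $j\geq 3$ annihilates it termwise. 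Hence it is a highest-weight vector giving $\s_{(r-1,1)}(\C^d)\subseteq\Lie^r(\C^d)$. Tensoring with the highest-weight vector $e_1^{\ot(k-r)}\in \Sym^{k-r}(\C^d)$ produces a highest-weight vector of weight $(k-1,1,0,\dots,0)$ inside $W_\lambda(\C^d)$, so $\s_{(k-1,1)}(\C^d)\subseteq W_\lambda(\C^d)$ and $c_{\lambda,(k-1,1)}\geq 1$.

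\emph{Necessity.} By Schur--Weyl duality, $(\C^d)^{\ot k}=\bigoplus_\mu \s_\mu(\C^d)\otimes[\mu]$ as a $\GL(\C^d)\times\SSS_k$-module, so the multiplicity of $\s_{(k-1,1)}(\C^d)$ in $(\C^d)^{\ot k}$ equals $\dim[k-1,1]=k-1$ by the hook length formula. Combining this with the Thrall decomposition $(\C^d)^{\ot k}=\bigoplus_{\lambda\vdash k}W_\lambda(\C^d)$ from \cite[Theorem 3.2]{AGRSS} yields
\[\sum_{\lambda\vdash k}c_{\lambda,(k-1,1)}=k-1.\]
There are exactly $k-1$ hook partitions $(r,1^{k-r})$ with $r\in\{2,\dots,k\}$, and the sufficiency step shows each contributes at least $1$. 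The total is thereby saturated, forcing $c_{\lambda,(k-1,1)}=0$ for every other $\lambda\vdash k$ (and incidentally $c_{(r,1^{k-r}),(k-1,1)}=1$ for each hook).

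The only delicate ingredient is the nonvanishing of $\mathrm{ad}_{e_1}^{r-1}(e_2)$ in $\Lie^r(\C^d)$; this is classical (it is a Lyndon basis element for $e_1<e_2$) but deserves to be flagged, and it is what rules out degenerate cases such as $r=1$. Apart from that, the argument is a weight computation plus the counting identity $k-1 = k-1$, so no Pieri or Littlewood--Richardson calculation is needed and the argument works uniformly in $r$.
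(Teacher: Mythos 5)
Your proof is correct. The necessity half coincides with the paper's: both reduce the problem, via Schur--Weyl duality and the Thrall decomposition, to the counting identity that the total multiplicity $\dim[(k-1,1)]=k-1$ equals the number of hook partitions $(r,1^{k-r})$ with $r\in\{2,\dots,k\}$, so it suffices to show each such Thrall module contributes at least one copy. Where you genuinely diverge is in that sufficiency step. The paper argues by cases: for $r=k$ it invokes Klyachko's theorem that $\Lie^k(\C^d)=W_{(k)}(\C^d)$ contains every irreducible except $\Sym^k(\C^d)$ and $\wedge^k(\C^d)$, with the exceptional values $k\in\{4,6\}$ checked on \texttt{SageMath}; for $2\le r\le k-1$ it uses Schocker's factorization of the multiplicities together with Pieri's rule to exhibit one positive summand. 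You instead produce the explicit highest-weight vector $e_1^{\ot(k-r)}\ot \mathrm{ad}_{e_1}^{r-1}(e_2)$ of weight $(k-1,1,0,\dots,0)$ inside $\Sym^{k-r}(\C^d)\ot\Lie^r(\C^d)$, which treats all $r$ uniformly, replaces the external citations and the computer verification by a one-line weight computation, and as a bonus pins the multiplicity down to exactly $1$ in each hook module. The points you flag are indeed the only ones that need care, and they hold: $\mathrm{ad}_{e_1}^{r-1}(e_2)\neq 0$ because its expansion in $(\C^d)^{\ot r}$ has coefficient $1$ on $e_1^{\ot(r-1)}\ot e_2$ (equivalently, it is the standard bracketing of the Lyndon word $1^{r-1}2$); a tensor product of nonzero highest-weight vectors is again a nonzero highest-weight vector since the raising operators act as derivations; and the standing assumption $d\ge 2$, which the paper makes implicitly as well since for $d=1$ the module $\s_{(k-1,1)}(\C^d)$ vanishes.
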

\begin{proof}
By \cite[Formula 4.12 and Theorem 6.3(2)]{FHrep}, in the Schur-Weil decomposition of $(\C^d)^{\ot k}$ there are $k-1$ copies of $\s_{(k-1,1)}(\C^d)$, hence it is enough to prove that each Thrall module $W_{(r,1^{k-r})}(\C^d)$ contains at least one copy of $ \s_{(k-1,1)}(\C^d)$.
First consider $r=k$. If $k\notin\{4,6\}$, then 
by \cite[page 917]{klyachko1974lie},   $W_{(k)} (\C^d)$ contains a copy of every Schur module except $\Sym^k(\C^d)$ and $\wedge^k(\C^d)$. 
The cases $k\in\{4,6\}$ can be explicitely checked on \texttt{SageMath}. 

Now consider $2\leq r \leq k-1$ and let $a_{\lambda,\mu}$ be the multiplicity  of the Schur module $\s_\mu (\C^d)$ in the decomposition of $W_{\lambda}(\C^d)$ into irreducible $\GL(\C^d)$-representation.
By \cite[Lemma 2.1]{schocker}, the coefficient $a_{(r,1^{k-r}),(k-1,1)}$ factors as
$$
a_{(r,1^{k-r}),(k-1,1)}=\sum_{s\vdash r}\sum_{t\vdash k-r}C^{(k-1,1)}_{s,t}\cdot a_{(r),s} \cdot a_{(1^{k-r}),t},
$$
where $ C^{(k-1,1)}_{s,t} $ denotes the Littlewood-Richardson coefficient (see \cite[Exercise 4.43]{FHrep}). 
Note that every term 
of the sum is a non-negative integer, because both the $a_{\lambda,\mu}$ and the Littlewood-Richardson coefficients are numbers counting some algebraic objects. Hence, in order to conclude that $a_{(r,1^{k-r}),(k-1,1)}\ge 1$ it is sufficient to find a nonzero term of the above sum. Let us have a closer look at the term
$$
C^{(k-1,1)}_{(r-1,1),(k-r)}\cdot a_{(r),(r-1,1)}\cdot a_{(1^{k-r}),(k-r)},
$$
 which appears in the summation by taking $ s=(r-1,1)$ and $t=(k-r)$. Since $W_{(1^\ell)}(\C^d)=\s_{(\ell)}(\C^d)$, we have $  a_{(1^{k-r}),(k-r)}=1$. Moreover, also $  a_{(r),(r-1,1)}\neq 0$ by the above argument on $r=k$. Finally the Littlewood-Richardson coefficient $C^{(k-1,1)}_{(r-1,1),(k-r)} $ is also nonzero by the Pieri's rule (see \cite[Exercise 4.44 and formula A.7]{FHrep}) and this concludes the proof.   
\end{proof}

Now we go back to work on a finite-dimensional vector space $V$ over a field $\KK\in\{\R,\C\}$.

\begin{lemma}\label{lemma: an element of Lie_k is not partially symmetric}
Let $k\ge 3$ and let $T_{(k)}\in \Lie^k(V)$. If $T_{(k)}\in \Sym^{k-1}(V)\ot V$    or $T_{(k)}\in V\ot \Sym^{k-1}(V)$, then $T_{(k)}=0$.
\begin{proof} Assume that $T_{(k)}\in \Sym^{k-1}(V)\ot V$. The other case is analogous. Observe that $T_{(k)}$ is the $k$-th signature of the pure $k$-volume path $\ell=(0,\dots,0,T_{(k)},0,\dots)$, hence it satisfies the shuffle relations. On one hand, this implies that $\f_i(\ell)=0$ for every $i\in\{1,\dots,k-1\}$. On the other hand
\begin{align*}
0=T_{i_1}T_{i_2,\dots,i_k}=T_{i_1\shu i_2,\dots,i_k}.
\end{align*}
By hypothesis $T_{(k)}$ is partially symmetric, thus
\begin{align*}
0&=T_{i_1}T_{i_2,i_3,\dots,i_k}=T_{i_1\shu i_2,i_3,\dots,i_k}=(k-1)T_{i_1,\dots,i_k}+T_{i_2,i_3,\dots,i_k,i_1} \mbox{ and in the same way}\\
0&=T_{i_2}T_{i_1,i_3,\dots,i_k}=T_{i_2\shu i_1,i_3,\dots,i_k}=(k-1)T_{i_1,\dots,i_k}+T_{i_1,i_3,\dots,i_k,i_2} 
\end{align*}
for every $i_1,\dots,i_k\in\{1,\dots,d\}$. This means that the last index commutes with all the other indices, hence $T_{(k)}\in\Sym^k(V)=W_{(1,\dots,1)}(V)$. Since $W_{(1,\dots,1)}(V)$ and $\Lie^k(V)=W_{(k)}(V)$ are in direct sum, we conclude that $T_{(k)}=0$.
\end{proof}
\end{lemma}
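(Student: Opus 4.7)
The plan is to use the shuffle identity to force $T_{(k)}$ into $\Sym^k(V)$, and then exploit the fact that $\Sym^k(V) = W_{(1^k)}(V)$ and $\Lie^k(V) = W_{(k)}(V)$ sit in distinct summands of the Thrall decomposition of $V^{\ot k}$. In order to apply the shuffle identity I will view $T_{(k)}$ as the top level of the pure $k$-volume path $\ell = (0, \ldots, 0, T_{(k)}, 0, \ldots) \in \Lie((V))$, so that by \Cref{thm: pure volume} every level of $\varphi(\ell)$ strictly below $k$ vanishes; in particular $\varphi_i(\ell) = 0$ for all $1 \le i \le k-1$.

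Assuming first that $T_{(k)} \in \Sym^{k-1}(V) \ot V$, I will shuffle the single letter $i_1$ with the length-$(k-1)$ word $(i_2, \ldots, i_k)$. \Cref{lem:shuffle identity} gives
\[
0 = \varphi_1(\ell)_{i_1} \cdot \varphi_{k-1}(\ell)_{i_2, \ldots, i_k} = \varphi_k(\ell)_{(i_1) \shuffle (i_2, \ldots, i_k)} = \sum_{j=0}^{k-1} (T_{(k)})_{i_2, \ldots, i_{j+1}, i_1, i_{j+2}, \ldots, i_k}.
\]
In the first $k-1$ of these summands, the letter $i_k$ stays in position $k$ while the first $k-1$ slots carry a permutation of $(i_1, \ldots, i_{k-1})$; the partial-symmetry hypothesis collapses them all to $(T_{(k)})_{i_1, \ldots, i_k}$, leaving the relation $(k-1)(T_{(k)})_{i_1, \ldots, i_k} + (T_{(k)})_{i_2, \ldots, i_k, i_1} = 0$. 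Re-running the identical shuffle with $i_1$ and $i_2$ interchanged yields $(k-1)(T_{(k)})_{i_1, \ldots, i_k} + (T_{(k)})_{i_1, i_3, \ldots, i_k, i_2} = 0$, so the transposition $(1\,k)$ fixes $T_{(k)}$. Combined with the full $\SG_{k-1}$-action on the first $k-1$ slots, this generates $\SG_k$, and therefore $T_{(k)} \in \Sym^k(V)$.

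To finish I will invoke \cite[Theorem 3.2]{AGRSS}: since $\Sym^k(V) = W_{(1^k)}(V)$ and $\Lie^k(V) = W_{(k)}(V)$ are distinct direct summands of $V^{\ot k}$, one has $T_{(k)} \in \Sym^k(V) \cap \Lie^k(V) = \{0\}$. The case $T_{(k)} \in V \ot \Sym^{k-1}(V)$ is handled by mirroring, i.e., shuffling $(i_1, \ldots, i_{k-1})$ on the left against the single letter $i_k$. The main obstacle is the shuffle bookkeeping: one has to see that precisely one of the $k$ insertions escapes the partial-symmetry hypothesis, and then cook up a companion shuffle identity that detects exactly the missing transposition; once these two relations are combined, the Thrall direct-sum structure disposes of $T_{(k)}$ at no further cost.
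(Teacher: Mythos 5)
Your proposal is correct and follows essentially the same route as the paper: embed $T_{(k)}$ as the top level of a pure $k$-volume path, use the vanishing of the lower levels together with the shuffle of a single letter into the remaining word to derive the two relations $(k-1)T_{i_1,\dots,i_k}+T_{i_2,\dots,i_k,i_1}=0$ and its companion with $i_1,i_2$ swapped, conclude full symmetry, and finish with the direct-sum position of $W_{(1^k)}(V)$ and $W_{(k)}(V)$ in the Thrall decomposition. The only cosmetic difference is that you spell out the shuffle bookkeeping and the generation of $\SG_k$ more explicitly than the paper does.
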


Despite \Cref{lemma: an element of Lie_k is not partially symmetric}, it is possible for an element of $\Lie^k(V)$ to have different sets of symmetries. For instance, in \cite[Example 2.2]{AGRSS} we see that elements of $\Lie^3(V)$ are invariant under flipping the first and the third index.
Our next step is to show how partial symmetry propagates to the lower levels of a signature.

\begin{proposition}\label{lemma: sigmak ps implies sigmak-1 ps}
Let $\ell=(T_{(i)}\mid i\in\N)\in\Lie((V))$ be a log signature. Let $k\ge 3$ and suppose that $\f_k(\ell)\neq 0$.
\begin{enumerate}
\item If $\f_k(\ell)\in \Sym^{k-1}(V)\ot V$, then $T_{(1)}\neq 0$ and $\f_{k-1}(\ell)\in \Sym^{k-2}(V)\ot V$.
\item If $\f_k(\ell)\in V\ot\Sym^{k-1}(V)$, then $T_{(1)}\neq 0$ and $\f_{k-1}(\ell)\in V\ot\Sym^{k-2}(V)$.
\end{enumerate}
\begin{proof}
First we prove our result under the hypothesis that $\KK=\C$. The proofs of the two parts are the same, so we present the first one. We start by showing that $T_{(1)}\neq 0$. By \Cref{rmk: schur decomposition of partially symmetric tensors}, the space of partially symmetric tensors decomposes as $\Sym^{k-1}(V)\ot V \cong \Sym^{k}(V)\oplus\s_{(k-1,1)}(V)$ as a sum of irreducible $\GL(V)$-representations. By \Cref{proposition: schur(k-1 2) va solo in W(t 1^k-t)} we write
\[\f_k(\ell)=\sum_{\lambda\vdash k} f_{\lambda}(\ell)
=\sum_{t=1}^k f_{(t,1^{k-t})}(\ell)\in\bigoplus_{t=1}^k
W_{(t,1^{k-t})}.
\]
If we had $T_{(1)}=0$, then $f_{(t,1^{k-t})}(\ell)=0$ for every $t\in\{1,\dots,k-1\}$ and therefore $\f_k(\ell)=f_{(k)}(\ell)\in W_{(k)}(V)$. Since $\f_k(\ell)$ is partially symmetric by hypothesis, it would vanish by \Cref{lemma: an element of Lie_k is not partially symmetric}, against our hypothesis. Now call $\sigma=\exp(\ell)$ the signature of $\ell$. Let $I$ be a word of length $k-2$ and $j\in\{1,\dots,d\}$ be a letter. If $\tilde{I}$ is a permutation of $I$, we have to show that $\sigma_{Ij}=\sigma_{\tilde{I}j}$. Since $T_{(1)}\neq 0$, there exists $i\in\{1,\dots,d\}$ such that $\sigma_i\neq 0$. 
The shuffle relations give
\begin{align*}
\sigma_i\sigma_{Ij}=\sigma_{i\shuffle (Ij)}=\sigma_{(i\shuffle I)j}+\sigma_{I(i\shuffle j)}.
\end{align*}
Since every summand of $(i\shuffle I)j$ is a word of length $k$ and $\f_{k}(\ell)\in \Sym^{k-1}(V)\ot V$, we get $\sigma_{(i\shuffle I)j}=\sigma_{(i\shuffle \tilde{I})j}$. In the same way $\sigma_{I(i\shuffle j)}=\sigma_{\tilde{I}(i\shuffle j)}$. Hence
\begin{align*}
\sigma_i\sigma_{Ij}
=\sigma_{(i\shuffle \tilde{I})j}+\sigma_{\tilde{I}(i\shuffle j)}
=\sigma_{i\shuffle (\tilde{I}j)}
=\sigma_i\sigma_{\tilde{I}j}\Rightarrow \sigma_i(\sigma_{Ij}-\sigma_{\tilde{I}j})=0.
\end{align*}
Since $\sigma_i\neq 0$, we conclude that $\sigma_{Ij}-\sigma_{\tilde{I}j}=0$.

It is not difficult to extend the result to the case $\KK=\R$. Assume that $\ell\in\Lie((\R^d))$ and $\f_k(\ell)\in \Sym^{k-1}(\R^d)\ot \R^d$. Then $\f_k(\ell)\in \Sym^{k-1}(\C^d)\ot \C^d$, so we can apply the argument above to deduce that $T_{(1)}\neq 0$ and $\f_{k-1}(\ell)\in \Sym^{k-2}(\C^d)\ot \C^d$. Hence
\[
\f_{k-1}(\ell)\in (\R^d)^{\ot k-1}\cap (\Sym^{k-2}(\C^d)\ot \C^d)=\Sym^{k-2}(\R^d)\ot \R^d.\qedhere
\]
\end{proof}
\end{proposition}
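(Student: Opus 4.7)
My plan proceeds in three phases: first prove $T_{(1)}\neq 0$ via the Thrall decomposition, then propagate partial symmetry from level $k$ down to level $k-1$ via the shuffle identity, and finally pass from $\C$ to $\R$ by complexification. Items (1) and (2) follow from symmetric arguments, so I would focus on (1) and note at the end that swapping "first $k-1$'' for "last $k-1$'' everywhere handles the other case.

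For the nonvanishing of $T_{(1)}$, working over $\C$, I would write $\varphi_k(\ell)=\sum_{\lambda\vdash k}f_\lambda(\ell)$ with $f_\lambda(\ell)\in W_\lambda(V)$. By \Cref{rmk: schur decomposition of partially symmetric tensors}, $\Sym^{k-1}(V)\ot V\cong\Sym^k(V)\oplus\s_{(k-1,1)}(V)$ as $\GL(V)$-representations; by \Cref{proposition: schur(k-1 2) va solo in W(t 1^k-t)} these two irreducibles sit only inside Thrall modules of the form $W_{(t,1^{k-t})}(V)$ with $t\in\{1,\dots,k\}$. Hence if $\varphi_k(\ell)$ is partially symmetric, only the summands $f_{(t,1^{k-t})}(\ell)$ can contribute. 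If in addition $T_{(1)}=0$, then every $f_{(t,1^{k-t})}(\ell)$ with $t<k$ vanishes, forcing $\varphi_k(\ell)=f_{(k)}(\ell)=T_{(k)}\in\Lie^k(V)$. But \Cref{lemma: an element of Lie_k is not partially symmetric} rules out a nonzero partially symmetric element of $\Lie^k(V)$, contradicting the hypothesis $\varphi_k(\ell)\neq 0$.

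For the shuffle propagation, set $\sigma=\exp(\ell)$ and pick a letter $i$ with $\sigma_i\neq 0$. Given a word $I$ of length $k-2$, a permutation $\tilde I$ of $I$, and a letter $j$, the shuffle identity gives $\sigma_i\sigma_{Ij}=\sigma_{i\shu(Ij)}$. I would split $i\shu(Ij)=(i\shu I)\cdot j+Ij\cdot i$ according to whether $i$ lands among the first $k-1$ positions or at the last position: each word in $(i\shu I)\cdot j$ ends in $j$ with its first $k-1$ letters being a reordering of $\{i\}\cup I$, while the single word $Iji$ ends in $i$ with first $k-1$ letters $\{j\}\cup I$. In both cases the hypothesis $\varphi_k(\ell)\in\Sym^{k-1}(V)\ot V$ lets $\sigma$ ignore any reordering of the first $k-1$ letters, so $\sigma_{i\shu(Ij)}=\sigma_{i\shu(\tilde Ij)}$. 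Dividing $\sigma_i(\sigma_{Ij}-\sigma_{\tilde Ij})=0$ by $\sigma_i$ yields $\sigma_{Ij}=\sigma_{\tilde Ij}$, which is exactly $\varphi_{k-1}(\ell)\in\Sym^{k-2}(V)\ot V$. The real case then follows by viewing $\ell\in\Lie((\C^d))$, applying the complex result, and intersecting: $(\R^d)^{\ot(k-1)}\cap(\Sym^{k-2}(\C^d)\ot\C^d)=\Sym^{k-2}(\R^d)\ot\R^d$. I expect the main obstacle to be Phase 1, whose reduction of the Thrall decomposition to the hook family $W_{(t,1^{k-t})}(V)$ depends on the nontrivial Littlewood--Richardson input of \Cref{proposition: schur(k-1 2) va solo in W(t 1^k-t)}; once that is in place, the shuffle bookkeeping of Phase 2 is purely mechanical and the descent to $\R$ is formal.
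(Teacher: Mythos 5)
Your proposal is correct and follows essentially the same route as the paper: Thrall decomposition plus the hook-partition lemma and the non-partial-symmetry of $\Lie^k(V)$ to get $T_{(1)}\neq 0$, the shuffle identity $\sigma_i\sigma_{Ij}=\sigma_{i\shu(Ij)}$ to propagate the symmetry down one level, and complexification for the real case. Your split $i\shu(Ij)=(i\shu I)\cdot j+Iji$ is in fact the cleaner (and literally correct) application of the recursive shuffle definition, and the invariance argument for each group of summands matches the paper's.
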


Now we are ready to show that many of the terms in the log-signature vanish if we assume partial symmetry.
\begin{proposition}\label{propos: se parzialmente simmetrico allora la metà della log si annulla} Let $k\ge 4$ and let $\ell=(T_{(i)}\mid i\in\N)\in\Lie((V))$ be a log-signature such that $\f_k(\ell)\neq 0$.
If either $\f_k(\ell)\in \Sym^{k-1}(V)\ot V$ or $\f_k(\ell)\in V\ot \Sym^{k-1}(V)$, then $T_{(i)}=0$ for every $2\le i\le k/2$.
\begin{proof}
Notice that it is enough to prove the result under the assumption that $\KK=\C$. The case $\KK=\R$ is a direct consequence. By \Cref{lemma: sigmak ps implies sigmak-1 ps} we know that $\f_i(\ell)$ is partially symmetric for every $i\in\{3,\dots,k\}$. Hence it is enough to show that if $\f_k(\ell)$ is partially symmetric then $T_{(\rd{\frac{k}{2}})}$ is partially symmetric.

Consider the case in which $k=2h$ is even. By \Cref{proposition: schur(k-1 2) va solo in W(t 1^k-t)} we write
\[\f_k(\ell)=\sum_{\lambda\vdash k} f_{\lambda}(\ell)
=\sum_{t=1}^k f_{(t,1^{k-t})}(\ell)\in\bigoplus_{t=1}^k
W_{(t,1^{k-t})}.
\]
This means that $0=f_{(h,h)}(\ell)=T_{(h)}^{\ot 2}$ and we conclude that $T_{(h)}=0$. On the other hand, if $k=2h+1$ is odd, then $\f_{2h}(\ell)$ is partially symmetric too by \Cref{lemma: sigmak ps implies sigmak-1 ps}. Since $2h$ is even, we argue as before to conclude that $T_{(h)}=0$.
\end{proof}
\end{proposition}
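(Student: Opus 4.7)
The plan is to combine the propagation lemma \Cref{lemma: sigmak ps implies sigmak-1 ps} with a Thrall-theoretic vanishing argument applied level by level. As in the preceding proofs of this section, I first reduce to the case $\KK=\C$: the hypothesis $\f_k(\ell)\in\Sym^{k-1}(\R^d)\otimes\R^d$ implies $\f_k(\ell)\in\Sym^{k-1}(\C^d)\otimes\C^d$ after complexification, and the conclusion $T_{(i)}=0$ is preserved when restricting back to $\R$. Applying \Cref{lemma: sigmak ps implies sigmak-1 ps} then gives $T_{(1)}\neq 0$; combined with the direct sum structure of the Thrall decomposition, this forces $\f_m(\ell)\neq 0$ for every $m\geq 1$, since $f_{(1^m)}(\ell)=\tfrac{1}{m!}T_{(1)}^{\otimes m}\neq 0$. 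Iterating \Cref{lemma: sigmak ps implies sigmak-1 ps} therefore yields $\f_j(\ell)\in\Sym^{j-1}(V)\otimes V$ for every $j\in\{3,\ldots,k\}$.

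Fix now $i$ with $2\leq i\leq\lfloor k/2\rfloor$ and set $j=2i$, so that $4\leq j\leq k$. The goal is to show that the Thrall component $f_{(i,i)}(\ell)\in W_{(i,i)}(V)$ vanishes; since $f_{(i,i)}(\ell)=\tfrac12 T_{(i)}^{\otimes 2}$, this will give $T_{(i)}=0$. By \Cref{rmk: schur decomposition of partially symmetric tensors}, $\Sym^{j-1}(V)\otimes V\cong\Sym^{j}(V)\oplus\s_{(j-1,1)}(V)$ as $\GL(V)$-representations. The projection $\pi_{(i,i)}\colon V^{\otimes j}\to W_{(i,i)}(V)$ is $\GL(V)$-equivariant, hence its image on $\Sym^{j-1}(V)\otimes V$ is a $\GL(V)$-subrepresentation of $W_{(i,i)}(V)$ whose irreducible constituents lie in $\{\Sym^j(V),\,\s_{(j-1,1)}(V)\}$. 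However, $\Sym^j(V)=W_{(1^j)}(V)$ already exhausts the $\Sym^j$-isotypic component of $V^{\otimes j}$, so it does not meet $W_{(i,i)}(V)$ when $i\geq 2$; and \Cref{proposition: schur(k-1 2) va solo in W(t 1^k-t)} shows that $\s_{(j-1,1)}(V)$ appears only in Thrall modules of shape $W_{(r,1^{j-r})}$, which excludes the partition $(i,i)$ when $i\geq 2$. Consequently $\pi_{(i,i)}(\f_j(\ell))=f_{(i,i)}(\ell)=0$.

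The principal obstacle is the vanishing step above: partial symmetry of $\f_j(\ell)$ is a \emph{global} constraint on the sum of its Thrall components, while we need to extract the vanishing of the single summand $f_{(i,i)}(\ell)$. The cleanest route is through the $\GL(V)$-equivariance of the Thrall projection together with the observation that $\Sym^{j-1}(V)\otimes V$ and $W_{(i,i)}(V)$ share no irreducible constituent when $i\geq 2$. Once this is established, the rest is routine: the cascade of partial symmetry to all lower levels via \Cref{lemma: sigmak ps implies sigmak-1 ps} is automatic, and running the argument at $j=4,6,\ldots,2\lfloor k/2\rfloor$ sweeps out exactly the required range of indices $i$.
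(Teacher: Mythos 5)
Your proposal is correct and follows essentially the same route as the paper: iterate \Cref{lemma: sigmak ps implies sigmak-1 ps} to propagate partial symmetry down to every level $j\in\{4,\dots,k\}$, then use \Cref{rmk: schur decomposition of partially symmetric tensors} and \Cref{proposition: schur(k-1 2) va solo in W(t 1^k-t)} to conclude that a partially symmetric $\f_j(\ell)$ lies in the hook-shaped Thrall modules only, forcing $f_{(i,i)}(\ell)=\tfrac12 T_{(i)}^{\ot 2}=0$ at $j=2i$. Your explicit remark that $T_{(1)}\neq 0$ guarantees $\f_m(\ell)\neq 0$ at every intermediate level (so the propagation lemma can indeed be iterated) is a detail the paper leaves implicit.
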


\begin{corollary}\label{coroll: if k ge 4 ogni parzialmente simmetrico è simmetrico.}
Let $k\ge 4$ and let $\ell=(T_{(i)}\mid i\in\N)\in\Lie((V))$ be a log-signature such that $\f_k(\ell)\neq 0$. If either $\f_k(\ell)\in \Sym^{k-1}(V)\ot V$ or $\f_k(\ell)\in V\ot \Sym^{k-1}(V)$, then $\f_i(\ell)\in \Sym^{i}(V)$ for every $i\in\{2,\dots, k\}$.
\begin{proof}
Since $\f_k(\ell)$ is partially symmetric, \Cref{propos: se parzialmente simmetrico allora la metà della log si annulla} implies that $T_{(2)},\dots,T_{(\rd{\frac{k}{2}})}$ vanish, so $\f_i(\ell)=\frac{1}{i!}T_{(1)}^{\ot i}$ is symmetric for every $i\in\{2,\dots,\rd{\frac{k}{2}}\}$. Now set $q=\rd{\frac{k}{2}}+1$ and consider $\f_q(\ell)$. Since $T_{(2)},\dots,T_{(\rd{\frac{k}{2}})}$ are zero, we have $$\f_q(\ell)=\frac{1}{q!}T_{(1)}^{\ot q}+T_{(q)}.$$
Since $\f_k(\ell)$ is partially symmetric, $\f_q(\ell)$ is partially symmetric too by \Cref{lemma: sigmak ps implies sigmak-1 ps}. Then 
$$T_{(q)}=\f_q(\ell)-\frac{1}{q!}T_{(1)}^{\ot q}$$
is partially symmetric too, so $T_{(q)}=0$ by \Cref{lemma: an element of Lie_k is not partially symmetric}. We deduce that $T_{(i)}=0$ and $\f_i(\ell)$ is symmetric for every $i\in\{2,\dots,q\}$. Now we repeat the argument. By considering
$$\f_{q+1}(\ell)=\frac{1}{(q+1)!}T_{(1)}^{\ot q+1}+T_{(q+1)},$$
the same reasoning implies that $T_{(q+1)}=0$ and $\f_{q+1}(\ell)$ is symmetric. By iterating the argument we obtain that $T_{(i)}=0$ and $\f_i(\ell)$ is symmetric for every $i\in\{2,\dots,k\}$.
\end{proof}
\end{corollary}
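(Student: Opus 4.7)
The plan is to combine the three preceding results, namely \Cref{propos: se parzialmente simmetrico allora la metà della log si annulla}, \Cref{lemma: sigmak ps implies sigmak-1 ps} and \Cref{lemma: an element of Lie_k is not partially symmetric}, in an inductive peeling argument that establishes the vanishing of $T_{(i)}$ one level at a time, starting from $i=2$ and climbing up to $i=k$.

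First I would use \Cref{propos: se parzialmente simmetrico allora la metà della log si annulla} as a base case: it immediately gives $T_{(i)}=0$ for every $2\le i\le \rd{k/2}$, so that in this range
\[
\f_i(\ell)=\frac{1}{i!}T_{(1)}^{\ot i}\in\Sym^i(V).
\]
This handles the lower half of the claim for free.

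Next I would set $q=\rd{k/2}+1$ and induct on $q$ up to $k$. The inductive hypothesis is that $T_{(i)}=0$ for all $2\le i\le q-1$, which reduces the exponential formula to
\[
\f_q(\ell)=\frac{1}{q!}T_{(1)}^{\ot q}+T_{(q)}.
\]
By iterated application of \Cref{lemma: sigmak ps implies sigmak-1 ps} starting from $\f_k(\ell)$ and stepping down $(k-q)$ times, the tensor $\f_q(\ell)$ is partially symmetric in the same sense as $\f_k(\ell)$. Since $T_{(1)}^{\ot q}$ is fully symmetric, subtracting it shows $T_{(q)}$ is partially symmetric; since $T_{(q)}\in\Lie^q(V)$ and $q\ge 3$, \Cref{lemma: an element of Lie_k is not partially symmetric} forces $T_{(q)}=0$. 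Therefore $\f_q(\ell)=\frac{1}{q!}T_{(1)}^{\ot q}$ is symmetric, closing the induction step and advancing to $q+1$.

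There is no real obstacle in the argument itself: all the work sits in the preceding lemmas, particularly the descent \Cref{lemma: sigmak ps implies sigmak-1 ps} (which already required the Thrall/Schur analysis of \Cref{proposition: schur(k-1 2) va solo in W(t 1^k-t)}) and the rigidity \Cref{lemma: an element of Lie_k is not partially symmetric}. The only subtlety is to verify that the induction starts at an index $q\ge 3$ so that \Cref{lemma: an element of Lie_k is not partially symmetric} applies; this is exactly guaranteed by the hypothesis $k\ge 4$, since then $q=\rd{k/2}+1\ge 3$.
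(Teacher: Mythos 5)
Your proposal is correct and follows essentially the same route as the paper: the base case from \Cref{propos: se parzialmente simmetrico allora la metà della log si annulla}, then the inductive peeling of $T_{(q)}$ for $q=\rd{k/2}+1,\dots,k$ via \Cref{lemma: sigmak ps implies sigmak-1 ps} and \Cref{lemma: an element of Lie_k is not partially symmetric}. The remark that $k\ge 4$ guarantees $q\ge 3$ is a useful explicit check that the paper leaves implicit.
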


Notice that the statements of \Cref{propos: se parzialmente simmetrico allora la metà della log si annulla} and \Cref{coroll: if k ge 4 ogni parzialmente simmetrico è simmetrico.} are empty for $k=3$. Indeed, it is possible for a $d\times d\times d$ signature tensor to be partially symmetric but not symmetric. We conclude the section by taking a closer look at the case $k=3$.

\begin{example}\label{example: k=3 d=2 partially}
Let $d=2$, $k=3$, and call $\sigma=\f_3(\ell)$. We want to see what happens when $\sigma\in \Sym^2\KK^2\ot \KK^2$. First of all we observe that if $\sigma$ is partially symmetric but not symmetric, then $T_{(1)}$, $T_{(2)}$ and $T_{(3)}$ are all nonzero. Indeed, if either $T_{(1)}=0$ or $T_{(2)}=0$, then the partial symmetry of $\sigma=\frac16 T_{(1)}^{\ot 3}+T_{(3)}$ implies that $T_{(3)}\in\Sym^2\KK^2\ot \KK^2$, against \Cref{lemma: an element of Lie_k is not partially symmetric}. In addition, if $T_{(3)}=0$ then one can explicitly compute that $\sigma$ must be symmetric.  Moreover, by \Cref{def: lie algebra}, we can write $T_{(2)}\in\Lie^2(\KK^2)$ and $T_{(3)}\in\Lie^3(\KK^2)$ in coordinates, see \cite[Example 2.2]{AGRSS}. In this way we can parametrize 
any $2\times 2 \times 2$ signature tensor $\sigma$ as
\begin{center}
\begin{tabular}{llll}
$\sigma_{111}=\frac16 x^3$,    &$\sigma_{112}=\frac16 x^2y+\frac12 ax+b$, &$\sigma_{121}=\frac16 x^2y-2b$,  & $\sigma_{122}=\frac16 xy^2+\frac12 ay+c$, \bigstrut\\
$\sigma_{211}=\frac16 x^2y-\frac12 ax+b$, & $\sigma_{212}=\frac16 xy^2-2c$, & $\sigma_{221}=\frac16 xy^2-\frac12 ay+c$, & $\sigma_{222}=\frac16 y^3$.
\end{tabular}
\end{center}
Now
\begin{align*}
\sigma\in\Sym^2\KK^2\ot \KK^2 \Leftrightarrow\begin{cases}
\sigma_{121}=\sigma_{211}\\
\sigma_{122}=\sigma_{212}
\end{cases}
\Leftrightarrow\begin{cases}
-2b=-\frac12 ax+b\\
\frac12 ay+c=-2c
\end{cases}\Leftrightarrow\begin{cases}
ax=6b\\
ay=-6c.
\end{cases}
\end{align*}
One possibility is $a=b=c=0$. This forces $T_{(2)}=0$ and $T_{(3)}=0$, so in this case
$\sigma$ is symmetric of rank at most 1. 
Clearly there are other solutions, like $a=b=c=1$ and $(x,y)=(6,-6)$, which is an example of a signature that is partially symetric but not symmetric. 
Surprisingly enough, this partial symmetry implies that the rank of $\sigma$ over $\C$ is 3.  This is unexpected, because the general element of $(\C^2)^{\ot 3}$ has rank 2. The rank of a $2\times 2\times 2$ tensor is well understood (see \cite[Table 10.3.1]{Lan}). To prove that $\sigma$ has rank 3 we need to show that the three flattenings of $\sigma$ have rank 2 and that $\sigma $ satisfies the equations of the so-called hyperdeterminant \cite[Chapter 14, Proposition 1.7]{GKZ}. 
As in \cite[Example 7.7]{AGRSS}, we  write the hyperdeterminant of any $2\times 2\times 2$ signature tensor as
$-\frac13\left(yb+xc\right)^{2}\left(4yb+4xc-3a^{2}\right)$.
A direct computation shows that if $\sigma\in\Sym^2\KK^2\ot \KK^2$ is partially symmetric but not symmetric, then its hyperdeterminant vanishes and all three flattenings of $\sigma$ have maximal rank 2. We stress that the converse does not hold: the signature tensor
\[
e_1\otimes e_1\otimes e_2-2e_1\otimes e_2\otimes e_1+e_2\otimes e_1\otimes e_1+e_1\otimes e_2\otimes e_2-2e_2\otimes e_1\otimes e_2+e_2\otimes e_2\otimes e_1
\]
has complex rank 3 and it is not partially symmetric with respect to the first and second index. 
We also remark that imposing $\sigma\in \KK^2\ot\Sym^2\KK^2$ gives completely analogous results.
\end{example}

\section{Conciseness}\label{section: coincise}
This section is devoted to study the smallest tensor space containing a signature tensor. A tensor $T\in V_1\otimes \cdots \otimes V_k$ is \emph{not concise} if there exist vector subspaces $W_1\subseteq V_1, \dots, W_k\subseteq V_k$, with at least one inclusion proper, such that $T\in W_1\otimes \cdots \otimes W_k$. 
By \Cref{remark: possiamo agire con GL}, for signature tensors we are interested in a more refined version of conciseness,  called \emph{symmetric conciseness} in \cite[Section 3]{PSS19}. A tensor $T\in V^{\otimes k}$ is \emph{symmetrically concise} if there is no proper vector subspace $W\subsetneq V$ such that $T\in W^{\otimes k}$. Conciseness implies symmetric conciseness, 
but the converse does not hold. We shall see that symmetric conciseness of the signature tensors is equivalent to the image of the path not being contained in an hyperplane. First we prove two intermediate results about how symmetric conciseness behaves with respect to different levels of a signature.

\begin{lemma}\label{lemma: simmetrically non-coinciseness si diffonde attraverso le signatures}
Let $\ell=(T_{(i)}\mid i\in \N)\in\Lie((V))$. If there exists a positive integer $k$ and a vector subspace $W\subsetneq V$ such that $\f_k(\ell)\in W^{\ot k}$, then $\f_t(\ell)\in W^{\ot t}$ whenever $t\mid k$.
\begin{proof}
Without loss of generality, we assume that $W$ is an hyperplane. Thanks to \Cref{remark: possiamo agire con GL}, it is not restrictive to take $W=\{x_1=0\}$.  Assume that $k=st$ and let $I$ be a word of length $t$ containing the letter 1. We want to show that the coordinate $\sigma_I$ of $\sigma^{(t)}\in V^{\otimes k}$ is zero. Let $J=I^{\shuffle s}$. The word $J$ is a sum of words of length $k$, and the letter 1 appears in every summand of $J$. By hypothesis, $\sigma_J=0$. The shuffle relations give 
\[0=\sigma_J=\sigma_{I^{\shuffle s}}=\sigma_I^s \quad \Rightarrow \quad \sigma_I=0.\qedhere
\]
\end{proof}
\end{lemma}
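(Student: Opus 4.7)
The plan is to propagate the vanishing of coordinates from level $k$ down to level $t$ using the shuffle identity. First I would reduce to the case where $W$ is a hyperplane. Indeed, any proper subspace $W\subsetneq V$ can be written as the intersection of the hyperplanes containing it, and a coordinate-wise check shows that $\left(\bigcap_\alpha H_\alpha\right)^{\ot t}=\bigcap_\alpha H_\alpha^{\ot t}$ inside $V^{\ot t}$. So it suffices to prove that $\f_t(\ell)\in H^{\ot t}$ for every hyperplane $H\supseteq W$, since the conclusion for $W$ will then follow from intersecting.

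Once $W$ is a hyperplane, I would exploit the $\GL(V)$-action described in \Cref{remark: possiamo agire con GL} to change coordinates so that $W=\{x_1=0\}$. In these coordinates, membership of $\f_k(\ell)$ in $W^{\ot k}$ translates into the assertion that $\sigma_J=0$ for every word $J$ of length $k$ in which the letter $1$ appears.

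Now let $I$ be any word of length $t$ that contains the letter $1$, and write $k=st$. The key step is to construct a word at level $k$ that captures the information of the coordinate $\sigma_I$. The natural choice is the iterated shuffle $J=I^{\shuffle s}=I\shuffle I\shuffle\cdots\shuffle I$: every summand of $J$ is a word of length $k$ and, since $I$ already contains the letter $1$, so does every summand. Thus $\sigma_J=0$ by the hypothesis. On the other hand, by iterating the shuffle identity (\Cref{lem:shuffle identity}) we obtain
\[
0=\sigma_J=\sigma_{I^{\shuffle s}}=\sigma_I^{s},
\]
which forces $\sigma_I=0$. Since $I$ was an arbitrary word of length $t$ containing the letter $1$, this proves that $\f_t(\ell)\in W^{\ot t}$.

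Not much of this is an obstacle: the shuffle identity does all the heavy lifting, and the divisibility hypothesis $t\mid k$ is used in exactly one place, namely to ensure that the iterated shuffle $I^{\shuffle s}$ lives at the level where we know vanishing. The mildest subtlety is the reduction to hyperplanes at the very beginning, which is more of a bookkeeping step than a real difficulty.
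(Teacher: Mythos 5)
Your proposal is correct and follows essentially the same argument as the paper: reduce to the hyperplane $\{x_1=0\}$ via the $\GL(V)$-action, then apply the shuffle identity to $J=I^{\shuffle s}$ to conclude $\sigma_I^s=0$. The only difference is that you spell out the reduction from a general proper subspace to hyperplanes, which the paper leaves implicit.
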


\begin{lemma}\label{lemma: se sono tutti non-symm-coincisi allora il W è lo stesso per tutti}
Let $\ell=(T_{(i)}\mid i\in \N)\in\Lie((V))$. Suppose that for every positive integer $k$, the tensor $\f_k(\ell)$ is not symmetrically concise. Then there exists a vector subspace $W\subsetneq V$ such that $\f_k(\ell)\in W^{\ot k}$ for every positive integer $k$.
\begin{proof} 
For each $k$, let $W_k\subsetneq V$ be a vector subspace such that $\f_k(\ell)$ is symmetrically concise in $(W_k)^{\ot k}$. In other words, we pick $W_k$ to have minimal dimension among the vector subspaces $U\subsetneq V$ such that $\f_k(\ell)\in U^{\ot k}$. 
Now we choose an index $h\ge 1$ such that $$\dim(W_h)=\max\{\dim(W_k)\mid k\ge 1\}.$$
In other words, among the 
entries of the signature of $\ell$, we choose one that is as concise as possible.
Now, our construction guarantees that $\f_k(\ell)\in (W_{k})^{\ot k}$ for all $k$. In particular, for $k=nh$, we have $\f_{nh}(\ell)\in (W_{nh})^{\ot nh}$ for every $n\ge 1$. Since $h$ divides $nh$, we apply \Cref{lemma: simmetrically non-coinciseness si diffonde attraverso le signatures} 
to obtain $\f_{h}(\ell)\in (W_{nh})^{\ot h}$. If we call
\[
W=\bigcap_{n=1}^\infty W_{nh},
\]
then $\f_h(\ell)\in W^{\ot h}$. By minimality of $W_h$, this implies that $\dim(W)\ge \dim(W_h)$. On the other hand $W\subseteq W_h$ by definition, hence
\begin{equation}\label{eq: Uh contenuto in ogni Unh}
W_h=W=\bigcap_{n=1}^\infty W_{nh}\subseteq W_{nh} \mbox{ for every } n\ge 1.
\end{equation}
Now, let $n\ge 1$. Our choice of $h$ yields $\dim(W_h)\ge \dim(W_{nh})$, hence \eqref{eq: Uh contenuto in ogni Unh} implies that $W=W_{h}=W_{nh}$.  By definition of $W_{nh}$,
\[
\f_{nh}(\ell)\in (W_{nh})^{\ot nh}= W^{\ot nh} \quad \Rightarrow \quad \f_{n}(\ell)\in W^{\ot n}\mbox{ by \Cref{lemma: simmetrically non-coinciseness si diffonde attraverso le signatures}.}  \qedhere
\]
\end{proof}
\end{lemma}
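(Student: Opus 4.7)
The plan is to leverage \Cref{lemma: simmetrically non-coinciseness si diffonde attraverso le signatures}, which propagates the non-concise subspace from level $k$ down to every divisor $t$ of $k$. The subtle point is that the proper subspace $W_k \subsetneq V$ carrying $\varphi_k(\ell)$ may a priori vary with $k$, so the core difficulty is to produce a single proper subspace $W \subsetneq V$ that accommodates all signatures simultaneously.

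First, for each $k$, I would fix a subspace $W_k \subsetneq V$ of minimal dimension such that $\varphi_k(\ell) \in W_k^{\otimes k}$. Since $\dim W_k \leq d-1$ for every $k$, the quantity $M = \max_k \dim W_k$ is attained, so I would pick some $h$ with $\dim W_h = M$. Intuitively, $h$ is the level at which the signature already ``uses up'' as many directions as it ever will.

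Next comes the exploitation of divisibility. For every $n \geq 1$, since $h \mid nh$, \Cref{lemma: simmetrically non-coinciseness si diffonde attraverso le signatures} yields $\varphi_h(\ell) \in W_{nh}^{\otimes h}$. Setting $W = \bigcap_{n \geq 1} W_{nh}$, we obtain $\varphi_h(\ell) \in W^{\otimes h}$, so by minimality of $W_h$ we must have $\dim W \geq \dim W_h = M$; combined with $W \subseteq W_h$, this forces $W = W_h$. Maximality of $M$ then rules out strict inclusions $W \subsetneq W_{nh}$ for any $n$, so $W = W_{nh}$ for every $n$. Finally, for any $n \geq 1$, the relation $\varphi_{nh}(\ell) \in W_{nh}^{\otimes nh} = W^{\otimes nh}$ combined with $n \mid nh$ gives $\varphi_n(\ell) \in W^{\otimes n}$ by one last application of the divisibility lemma, as desired.

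The main obstacle is that the divisibility lemma alone is not enough to compare $\varphi_a(\ell)$ and $\varphi_b(\ell)$ when neither of $a,b$ divides the other; one must funnel everything through common multiples. The trick of intersecting the sequence $W_h, W_{2h}, W_{3h}, \dots$ together with the maximality of $\dim W_h$ is what stabilises the subspace, since without the maximality assumption the intersection could a priori shrink below $M$ and fail to contain $\varphi_h(\ell)$, breaking the argument.
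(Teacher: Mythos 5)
Your proposal is correct and follows the paper's own argument essentially verbatim: the same minimal subspaces $W_k$, the same choice of $h$ maximizing $\dim W_h$, the same intersection $W=\bigcap_n W_{nh}$ stabilized by combining minimality of $W_h$ with maximality of its dimension, and the same final application of the divisibility lemma. No gaps to report.
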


Before we give the main result of this section, recall that a \emph{tree-like excursion} is a path $X$ concatenated with the path $t\mapsto X(1-t)$. In other words, it is a path that travels along $X$ and then comes back along the same $X$. By their nature, tree-like excursions do not change the signature \cite{chenuniqueness}.

\begin{proposition}\label{propos: coinciseness for paths} Let $\ell=(T_{(i)}\mid i\in \N)\in\Lie((V))$ be a log-signature. The following are equivalent.
\begin{enumerate}
\item \label{bullet: tutti non symm coinc}for every $k\ge 2$, the tensor $\f_k(\ell)$ is not symmetrically coincise.
\item \label{bullet: tutti non symm coinc da un certo punto in poi}there exists $k_0\in \N$ such that, for every $k>k_0$, the tensor $\f_k(\ell)$ is not symmetrically coincise.
\item\label{bullet: lo stesso W per tutti} there exists an hyperplane $W\subsetneq V$ such that $\f_k(\ell)\in W^{\ot k}$ for every $k\in\N$.
\end{enumerate}
If $\exp(\ell)$ is the signature of a 
bounded variation path $X:[0,1]\to V$, then each of the previous three statement is also equivalent to
\begin{enumerate}
\item[(4)] \label{bullet: contenuto in uno spazio affine}there exists an affine hyperplane $A\subsetneq V$, parallel to $W$, such that $\im(X)\subseteq A$ (up to tree-like excursion). 
\end{enumerate}
\begin{proof} The equivalence between \eqref{bullet: tutti non symm coinc} and \eqref{bullet: tutti non symm coinc da un certo punto in poi} follows from \Cref{lemma: simmetrically non-coinciseness si diffonde attraverso le signatures}, while the equivalence between \eqref{bullet: tutti non symm coinc} and \eqref{bullet: lo stesso W per tutti} follows from \Cref{lemma: se sono tutti non-symm-coincisi allora il W è lo stesso per tutti}. Assume now that $\exp(\ell)=\sigma(X)$ is the signature of a  
path of bounded variation. Suppose that 
\eqref{bullet: lo stesso W per tutti} holds and there exists a vector subspace $W\subsetneq V$ such that $\sigma^{(k)}(X)\in W^{\ot k}$. Again by \Cref{remark: possiamo agire con GL} we assume that $W$ has equation $x_1=0$. Consider the 
bounded variation path $Y:[0,1]\to V$ defined by\[Y(t)=(0,X_2(t),\dots,X_d(t)).\]By construction $\sigma^{(k)}(Y)=\sigma^{(k)}(X)$ for every $k\in\N$. Thanks to \cite[Theorem 4.1]{chenuniqueness} or \cite[Theorem 4]{HL10}, we deduce that $Y$ is equal to $X$, up to reparametrization,  translation and tree-like excursion. In particular, the coordinate function $X_1$ of $X$ is constant, so $\im(X)$ is contained in a proper affine subspace of $ V$ parallel to $W$.

Now assume that (4) holds and let $W\subsetneq V$ be the support of $A$. Thanks to \Cref{remark: possiamo agire con GL}, we assume that $W$ has equation $x_1=0$. If we write $X(t)=(X_1(t),\dots,X_d(t))$, then by hypothesis $X_1$ is constant. Given $k\in \N$, definition \eqref{defin di signature con gli iterated integrals} tells us that 
\[\sigma^{(k)}(X)_{i_1,\dots,i_k}=0\]
whenever $1\in\{i_1,\dots,i_k\}$. This means that $\sigma^{(k)}(X)\in W^{\ot k}$, so $\sigma^{(k)}(X)$ is not symmetrically concise.
\end{proof}
\end{proposition}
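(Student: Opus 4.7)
The plan is to first establish the equivalence of (1), (2), and (3) by leveraging the two lemmas just proven, and then to handle the equivalence with (4) under the bounded variation hypothesis as a separate step.

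For the ring (1) $\Leftrightarrow$ (2) $\Leftrightarrow$ (3): The implication (3) $\Rightarrow$ (1) is immediate, since the hyperplane $W$ directly witnesses non-symmetric conciseness at every level, and (1) $\Rightarrow$ (2) is trivial. For (2) $\Rightarrow$ (1), one picks an arbitrary $k\ge 2$ and chooses $n$ large enough that $nk>k_0$. Then $\f_{nk}(\ell)$ lies in some proper subspace $W_{nk}^{\ot nk}$, and since $k\mid nk$, \Cref{lemma: simmetrically non-coinciseness si diffonde attraverso le signatures} forces $\f_k(\ell)\in W_{nk}^{\ot k}$, so $\f_k(\ell)$ is not symmetrically concise. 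Finally (1) $\Rightarrow$ (3) is precisely the content of \Cref{lemma: se sono tutti non-symm-coincisi allora il W è lo stesso per tutti}.

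For the equivalence with (4), assume $\exp(\ell)=\sigma(X)$ with $X$ of bounded variation. The direction (4) $\Rightarrow$ (3) should be a direct computation from \eqref{defin di signature con gli iterated integrals}: by \Cref{remark: possiamo agire con GL} one can normalize so that $W=\{x_1=0\}$; then $X_1$ is constant, hence $\dot X_1\equiv 0$, so every iterated integral whose index tuple contains a $1$ vanishes, and $\sigma^{(k)}(X)\in W^{\ot k}$. The harder direction is (3) $\Rightarrow$ (4), which requires the uniqueness theorem for signatures. The idea is to construct an explicit path whose image manifestly lies in a hyperplane parallel to $W$ and whose signature matches $\sigma(X)$: after normalizing so that $W=\{x_1=0\}$, define $Y(t)=(0,X_2(t),\dots,X_d(t))$. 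A quick check from the integral formula shows $\sigma^{(k)}(Y)=\sigma^{(k)}(X)$ for all $k$, and then \cite[Theorem 4.1]{chenuniqueness} gives that $Y$ equals $X$ up to reparametrization, translation and tree-like excursion. Consequently $\im(X)$ lies on an affine hyperplane parallel to $W$.

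The main obstacle is the step (3) $\Rightarrow$ (4). The algebraic statement about signature coordinates vanishing is easy to verify, but extracting the genuinely geometric conclusion that $X$ itself lies on an affine hyperplane requires an external input, namely Chen/Hambly–Lyons uniqueness; without it, one only knows that $Y$ and $X$ have the same signature, not that they agree as paths. The delicate point in writing the proof is to construct $Y$ so that (i) $\sigma(Y)=\sigma(X)$ follows transparently from the vanishing of coordinates of $\sigma^{(k)}(X)$ indexed by $1$, and (ii) $\im(Y)$ lies in the desired affine hyperplane by construction, so that the unavoidable equivalences (reparametrization, translation, tree-like excursion) coming from the uniqueness theorem do not spoil the hyperplane containment.
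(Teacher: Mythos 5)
Your proposal is correct and follows essentially the same route as the paper: the equivalence of (1)–(3) via the two preceding lemmas (with your (2) $\Rightarrow$ (1) divisibility argument being exactly how \Cref{lemma: simmetrically non-coinciseness si diffonde attraverso le signatures} is meant to be applied), and the equivalence with (4) via normalizing $W=\{x_1=0\}$, constructing $Y(t)=(0,X_2(t),\dots,X_d(t))$, and invoking the Chen/Hambly--Lyons uniqueness theorem. No gaps.
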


Observe that if one of the first three equivalent statements of \Cref{propos: coinciseness for paths} holds, then in particular $\f_k(\ell)$ is not concise for any $k\ge 2$. It is natural to wonder if the converse holds. The next example shows that this is not the case.

\begin{example}\label{example: a pure volume path withevery signature non-coincise but at least one signature symmetrically coincise} Let $ T_{(3)}=e_1\otimes (e_2\otimes e_3-e_3\otimes e_2)-(e_2\otimes e_3-e_3\otimes e_2)\otimes e_1\in \Lie^3(\KK^3)$ 
and let $\ell=(0,0,T_{(3)},0,\dots,)\in\Lie(\KK^3)$ be the corresponding pure volume path. We want to show that no signature of $\ell$ is concise, but there exists one at least one signature of $\ell$ which is symmetrically concise. Explicit computation shows that $\f_3(\ell)=T_{(3)}$ is symmetrically concise but 
it is not concise. By \Cref{theorem: criterion}, higher signatures of $\ell$ are either 0 or powers of $T_{(3)}$. Since the power of a non-concise tensor is also non-concise, all the signatures of $\ell$ are non-concise. 
\end{example}

Explicit computations show that for every $i\in\{1,2,3\}$ there exist a symmetrically concise signature tensor of order 3 that is concise with respect to each factor except the $i$-th one.

\section*{Acknowledgments}
We would like to thank the University of Bologna and the University of Warsaw for hosting us during our research visits.

\bibliographystyle{alpha}
\bibliography{References.bib}

\end{document}